\documentclass[10pt, a4paper]{amsart}

\usepackage{amsmath,amssymb,enumerate}

\usepackage[T1]{fontenc}

\usepackage{babel}
\usepackage{amstext}
\usepackage{amsmath}
\usepackage{amsfonts}
\usepackage{latexsym}
\usepackage{ifthen}
\usepackage{amssymb}
\usepackage{array}
\usepackage{makecell}
\usepackage{mathrsfs}

\newcommand{\mm}{\mathfrak m}
\newcommand{\wt}{\widetilde}

\DeclareMathOperator{\Tr}{Tr}

\newcommand{\codim}{{\rm codim}}

\newtheorem{lemma1}{}[section]

\newenvironment{lemma}{\begin{lemma1}{\bf Lemma.}}{\end{lemma1}}
\newenvironment{example}{\begin{lemma1}{\bf Example.}\rm}{\end{lemma1}}

\newenvironment{theorem}{\begin{lemma1}{\bf Theorem.}}{\end{lemma1}}

\newenvironment{proposition}{\begin{lemma1}{\bf Proposition.}}{\end{lemma1}}
\newenvironment{corollary}{\begin{lemma1}{\bf Corollary.}}{\end{lemma1}}
\newenvironment{remark}{\begin{lemma1}{\bf Remark.}\rm}{\end{lemma1}}

\newenvironment{definition}{\begin{lemma1}{\bf Definition.}}{\end{lemma1}}

\newenvironment{conjecture}{\begin {lemma1}{\bf Conjecture.}}{\end{lemma1}}

\newenvironment{remark*}{{\bf Remark.}}{}
\newenvironment{remarks*}{{\bf Remarks.}}{}
\newenvironment{example*}{{\bf Example.}}{}
\newenvironment{assumption*}{{\bf Assumption.}}{}

\newcommand{\Q}{\ensuremath{\mathbb{Q}}}
\newcommand{\Z}{\ensuremath{\mathbb{Z}}}
\newcommand{\C}{\ensuremath{\mathbb{C}}}
\newcommand{\N}{\ensuremath{\mathbb{N}}}
\newcommand{\PP}{\ensuremath{\mathbb{P}}}

\usepackage{eurosym}

\newcommand{\holom}[3]{\ensuremath{#1\colon#2  \rightarrow #3}}
\newcommand{\fibre}[2]{\ensuremath{#1^{-1} (#2)}}

\makeatletter
\ifnum\@ptsize=0  \fi \ifnum\@ptsize=2  \fi 

\newcommand\sO{{\mathcal O}}

\DeclareMathOperator*{\sing}{sing}

\usepackage{xcolor}

\usepackage{hyperref}
\hypersetup{
	colorlinks=true,
	linkcolor=red,
	citecolor=blue}

\makeatletter
\@namedef{subjclassname@2020}{\textup{2020} Mathematics Subject Classification}
\makeatother

\subjclass[2020]{14J17, 14J70, 14J40, 32H50, 32S40}
\keywords{polarised endomorphism, totally invariant hypersurface,
Bernstein--Sato polynomial, Milnor monodromy}

\newcommand{\cO}{\mathcal{O}}

\newcommand{\CC}{\mathbb{C}}
\newcommand{\QQ}{\mathbb{Q}}

\address{Ama\"el Broustet, Universit{\'e} de Lille, CNRS, UMR 8524 Laboratoire Paul Painlevé, Département de math{\'e}matiques, F-59000 Lille, France.}
\email{amael.broustet@univ-lille.fr}

\address{Andreas H\"oring, Universit\'e C\^ote d'Azur, CNRS, LJAD, France}
\email{Andreas.Hoering@univ-cotedazur.fr}

\title{Singularities of totally invariant hypersurfaces of endomorphisms}
\author{Amaël Broustet \and Andreas Höring}
\date{September 8th, 2026} 

\begin{document}

\begin{abstract} 
Let $f\colon X\to X$ be a polarised endomorphism of a  complex
projective manifold, and let $D\subset X$ be a reduced totally invariant
hypersurface.  We prove that every irreducible component of the singular locus of $D$
has codimension one in $D$. In particular, if $D$ is normal, then it is
smooth.  As an application we prove the linearity of totally
invariant divisors for endomorphisms of $\PP^4$.
\end{abstract}

\maketitle

\section{Introduction} 

\subsection{Main result}

Let $X$ be a projective manifold of dimension $n\ge 2$. An endomorphism is a non-invertible finite morphism $f \colon X\to X$.
One says that the endomorphism $f$
%$f\colon X\to X$ 
is polarised if there exist an ample line bundle $L$ on $X$ and an integer $q>1$ such that
$ f^*L \simeq L^{\otimes q}.$
A reduced divisor $D\subset X$ is said to be totally invariant if we have a set-theoretical equality
$$
f^{-1}(D)=D.
$$ 
Note that this implies that $D$ itself admits a polarised endomorphism $\holom{f|_D}{D}{D}$, but the property of being totally invariant is typically much stronger.
A classical example of this situation is the endomorphism
$$
\holom{f}{\PP^n}{\PP^n}, \qquad [X_0:\ldots:X_n] \ \mapsto \ [X_0^q:\ldots:X_n^q] 
$$
for which each coordinate hyperplane $\{ X_i=0 \}$ is totally invariant.
The importance of totally invariants subsets is due to their role for the unique maximal entropy measure of $f$, as shown by the work of Forn\ae ss-Sibony \cite{FS94} and Briend-Duval \cite{BD01}.
More recently totally invariant subvarieties and their singularities have been investigated by Zhong \cite{Zho21},  Yoshikawa \cite{Yos22}, Chang-Luo \cite{CL26} and Luo-Meng \cite{LM26}. Their work connects them
to the large body of the work on the existence of log Calabi-Yau structures on varieties admitting a polarised
endomorphism \cite{Men23, CZ25, MYY25}, as conjectured by the first-named author and Gongyo \cite{BG17}. 

Totally invariant hypersurfaces are expected to be very special:
\begin{conjecture} \label{conj:linear}
Let $\holom{f}{\PP^n}{\PP^n}$ be an endomorphism of the projective space.
If $D \subset \PP^n$ is a totally invariant prime divisor, then $D$ is a hyperplane.
\end{conjecture}

If $D$ is smooth this follows from results of Paranjape and Srinivas \cite{ParanjapeSrinivas1989} and Cerveau and Lins Neto \cite{CerveauLinsNeto2000}. Beauville  \cite{Beauville2001} showed the stronger property that smooth hypersurfaces of dimension at least two and degree at least three do not admit an endomorphism.
Beauville's statement is not true for singular hypersurfaces of the projective space as shown by an example of Zhang \cite[Ex.1.9]{Zha14}.
A more refined approach taking into account the total invariance allows to exclude that
in the situation of 
Conjecture \ref{conj:linear} 
the divisor $D$ has degree $n+1$
\cite[Theorem~2.1]{HN11} or $n$
\cite[Theorem~1.1]{Hoe17}. In particular Conjecture \ref{conj:linear} is known if $n \leq 3$.

Our main result is the following.

\begin{theorem}\label{thm:main}
Let $X$ be a projective manifold, and let
$f\colon X\to X$ be a polarised endomorphism.
Let $D \subset X$ be a reduced hypersurface that is totally invariant.
Then $D$ is smooth or not normal.
\end{theorem}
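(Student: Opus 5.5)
The plan is to argue by contradiction, localise at a fixed point of an iterate of $f$ lying in the bad part of the singular locus, and then show that total invariance forces the local singularity invariants of $D$ (Milnor monodromy and the roots of the Bernstein--Sato polynomial) to be those of a smooth germ. The contradiction hypothesis is that $D$ is normal but singular. Then $\Sigma:=\operatorname{Sing}(D)$ has codimension at least two in $D$, and $D$ is locally analytically irreducible at every point, since normal implies unibranch.

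\emph{Step 1: reduction to a fixed point.} Since $f^{-1}(D)=D$ and $f$ is finite, $f$ maps $\operatorname{Sing}(D)$ onto itself, and $f^{-1}(\Sigma)=\Sigma$. To see the inclusion $f^{-1}(\Sigma)\subset\Sigma$, write $f^*D=\sum m_i D_i$ locally. At a smooth point $y$ of $D$ whose image lies in $\Sigma$, the local equation $h\circ f$ would be $u\cdot g^{m}$ with $g$ smooth, and then finiteness would force $h$ to be smooth at $f(y)$. I would verify this via the local degree, or more robustly via the multiplicity. Hence some irreducible component $Z$ of $\Sigma$ is $f^k$-periodic. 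Then $f^k|_Z$ is polarised, and polarised endomorphisms have dense periodic points. After replacing $f$ by an iterate, we therefore get a point $x\in Z$, general in $Z$, with $f(x)=x$ and $f(Z)=Z$. Since $D$ is unibranch at $x$, a local equation $h$ of $D$ at $x$ satisfies
$$
h\circ f = u\cdot h^{m},\qquad u\in\mathcal{O}_{X,x}^{*},
$$
where $m$ is the ramification index of $f$ along the component of $D$ through $x$. Because $\deg f|_D=q^{n-1}<q^n=\deg f$, we have $m\ge 2$ after possibly iterating, and under further iteration $m$ becomes arbitrarily large.

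\emph{Step 2: transport of local invariants.} The relation $h\circ f=u h^m$ shows that $f$ maps a Milnor fibre of $h\circ f$ at $x$ (essentially an $m$-fold cyclic thickening of the Milnor fibre $F_x$ of $h$) finitely onto $F_x$. Comparing the Milnor fibrations should give the following constraint: if $\lambda$ is an eigenvalue of the monodromy of $h$ at $x$, then some $m$-th root of $\lambda$ (or $\lambda^{m}$) is again an eigenvalue. This uses the transfer map for the finite map $f$, which makes $H^*(F_x)$ a direct summand. The eigenvalue set is finite and consists of roots of unity, and the relation holds for all iterates with $m\to\infty$. The conclusion I aim for is that all eigenvalues equal $1$. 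Equivalently, by Malgrange--Kashiwara, every root of the local $b$-function $b_{h,x}(s)$ is an integer.

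In parallel, I would run the same argument with the log canonical threshold, or better the minimal exponent $\tilde\alpha_x(h)$. The ramification formula $K_X=f^*K_X+R$ with $R\ge(m-1)D$ near $x$ yields inequalities of the form $\tilde\alpha_x(h)\ge$ (an affine function of $\tilde\alpha_x(h)$ with slope $m$). Letting $m\to\infty$ should force $\tilde\alpha_x(h)\ge1$, so the pair $(X,D)$ is log canonical at $x$ and the roots of $b_{h,x}$ lie in $[-n+1,-1]$.

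\emph{Step 3: conclusion via Saito.} It then remains to show that a reduced hypersurface germ whose singular locus has codimension $\ge 2$ in $D$, and whose $b$-function roots are all integers, is smooth. Here I would use Saito's results: $-1$ is a root of multiplicity one, and integer roots $<-1$ can only come from strata of the singular locus of low codimension, through the non-trivial part of the monodromy or the vanishing cohomology on transversal slices. Applied at a general point of $Z$, the transversal slice is an isolated singularity of dimension $\ge 1$. Its Milnor number is positive, and monodromy equal to the identity on the vanishing cohomology is impossible there: by A'Campo's theorem the trace of the monodromy is $0$ for the monodromy of an isolated singularity, or alternatively, monodromy $=\mathrm{id}$ contradicts the fact that $-1$ is the only integer root. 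Hence the slice is smooth, contradicting $x\in\Sigma$.

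I expect the main obstacle to be Step 2: making rigorous the comparison of the monodromy (or $b$-function) of $h$ with that of $u h^m$ under the finite, non-étale map $f$. In particular, I need to ensure that the unit $u$ and the extra ramification $R'$ of $f$ along $f^{-1}(D)$ outside $D$ near $x$ do not destroy the eigenvalue relation. I would first try to work with the transversal slice at a general point of $Z$, where the singularity is isolated and Milnor-number and Lefschetz-number arguments are available.
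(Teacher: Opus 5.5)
Your architecture matches the paper's: a fixed point $x$ at a general point of a periodic component $Z$ of $D_{\sing}$, the relation $h\circ f=uh^m$, then a transversal slice and A'Campo. However, Steps 2 and 3 both have genuine gaps.

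\emph{Step 2 cannot be carried out on monodromy eigenvalues.} The relation you aim for is that an $m$-th power (or an $m$-th root) of an eigenvalue is again an eigenvalue. For every $m=q^k$ this is satisfied by sets such as $\{1,e^{2\pi i/3},e^{4\pi i/3}\}$ when $q=2$, since $\lambda\mapsto\lambda^{2^k}$ just permutes them. Letting $m\to\infty$ changes nothing.

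\emph{The target of Step 2 is too weak anyway, so Step 3 fails as written.} Even ``all eigenvalues are $1$'' together with log canonicity does not suffice. The ordinary double point $z_1^2+z_2^2+z_3^2+z_4^2$ in $\C^4$ is normal, singular and log canonical, its $b$-function $(s+1)(s+2)$ has only integer roots, and its monodromy on $\widetilde H^3$ is $(-1)^4=\mathrm{id}$. Correspondingly, A'Campo's theorem does not exclude trivial monodromy on the slice. It only gives $0=1+(-1)^{r-1}\mu$, i.e.\ $\mu=1$ and $r$ even, which in your normal setting ($r\ge3$) is exactly this example with $r\ge4$. Your fallback, that this contradicts ``$-1$ being the only integer root'', uses a fact you never establish. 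The statement attributed to Saito, that $-1$ is a root of multiplicity one, is false: $x_1x_2$ has $b=(s+1)^2$.

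\emph{What is missing is to work with the rational roots themselves, where the order on $\Q$ is available.}
\begin{enumerate}
\item The normalised trace of the finite flat map $f$ splits the inclusion $a\mapsto a\circ f$. Pushing a functional equation for $h\circ f$ through this splitting (as in \`Alvarez Montaner--Huneke--N\'u\~nez-Betancourt) gives $b_{h,x}\mid b_{h\circ f,x}=b_{h^m,x}$.
\item Composing the functional equations of $h$ at $ms,ms+1,\dots,ms+m-1$ gives $b_{h^m,x}(s)\mid\prod_{j=0}^{m-1}b_{h,x}(ms+j)$.
\item If the smallest root $\rho$ were $<-1$, then some $m\rho+j\le\rho+(m-1)(\rho+1)<\rho$ would be a root. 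Hence all roots are $\ge-1$.
\item With $\mathrm{lct}_x(X,D)=1$ this gives $b_{h,x}=(s+1)^d$ with $d\ge2$, hence $\widetilde\alpha_x(h)=1$. Your ramification-formula idea for the lct can be made to work; the paper quotes [BH14].
\item This is what kills the even-dimensional ODP. The slice has $b=(s+1)(s+r/2)$, and the Musta\c{t}\u{a}--Popa inequality $\widetilde\alpha_x(h|_H)\le\widetilde\alpha_x(h)$ forces $r/2\le1$.
\end{enumerate}

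You also need a result transferring monodromy or spectrum from $h$ to the slice; the paper uses the Dimca--Maisonobe--Saito--Torrelli comparison of spectra. Finally, the inclusion $f^{-1}(D_{\sing})\subseteq D_{\sing}$ in Step 1 is the Giraldo--Roeder theorem. A multiplicity count does not obviously reprove it, so it should be cited.
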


As an immediate consequence we obtain Conjecture \ref{conj:linear} in the normal case:

\begin{corollary}\label{cor:intro-pn}
For $n\geq2$, let $f\colon \PP^n \to \PP^n$ be an endomorphism, and let
$D \subset \PP^n$ be a totally invariant hypersurface that is normal. Then $D$ is a hyperplane.
\end{corollary}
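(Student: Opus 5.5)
The plan is to reduce Corollary~\ref{cor:intro-pn} to the smooth case with Theorem~\ref{thm:main}, and then quote the known results for smooth totally invariant hypersurfaces.

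\emph{Step 1: $f$ is polarised.} Every endomorphism $f\colon \PP^n\to\PP^n$ is given by homogeneous polynomials of a common degree $q$, so $f^*\sO_{\PP^n}(1)\simeq \sO_{\PP^n}(q)$. Since $f$ is finite and non-invertible, its degree is $q^n>1$, and hence $q>1$. Thus $f$ is polarised with respect to the ample line bundle $\sO_{\PP^n}(1)$, and Theorem~\ref{thm:main} applies to the reduced totally invariant hypersurface $D$.

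\emph{Step 2: $D$ is prime and smooth.} By Theorem~\ref{thm:main}, a normal $D$ is smooth. It remains to check that $D$ is irreducible, so that the classical results about prime divisors apply.
\begin{itemize}
\item Suppose $D_1$ and $D_2$ are two distinct irreducible components of $D$.
\item Both are ample divisors on $\PP^n$ and $n\ge 2$, so $D_1\cap D_2$ is non-empty of dimension $n-2\ge 0$.
\item At any point of $D_1\cap D_2$ the germ of $D$ is not irreducible, so $D$ is not normal there. This contradicts the hypothesis.
\end{itemize}
Hence $D$ is a smooth totally invariant prime divisor of $\PP^n$.

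\emph{Step 3: conclude.} For a smooth totally invariant prime divisor, linearity is exactly the result of Paranjape--Srinivas \cite{ParanjapeSrinivas1989} and Cerveau--Lins Neto \cite{CerveauLinsNeto2000} recalled in the introduction. So $D$ is a hyperplane.

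Beauville's theorem \cite{Beauville2001} alone is not enough here, which is why Step~3 must use total invariance. It only excludes degree $\ge 3$ in dimension $\ge 2$, and smooth quadrics and plane cubics do carry endomorphisms.

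All the substance lies in Theorem~\ref{thm:main}. The only point needing care is Step~2, where the dimension hypothesis $n\ge 2$ forces distinct components to meet.
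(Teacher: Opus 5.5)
Your proof is correct and is essentially the paper's argument: Theorem~\ref{thm:main} upgrades normality to smoothness, and Cerveau--Lins Neto together with Paranjape--Srinivas then force $D$ to be a hyperplane. You also spell out two points the paper uses tacitly, namely that every endomorphism of $\PP^n$ is polarised by $\sO_{\PP^n}(1)$, and that a normal hypersurface in $\PP^n$ with $n\ge 2$ is irreducible because distinct components must meet. One small correction to your closing aside only: smooth quadrics of dimension at least three admit no endomorphisms of degree $>1$ (this is exactly the Paranjape--Srinivas input), so the examples showing that Beauville's theorem alone is insufficient are conics, the quadric surface $\PP^1\times\PP^1$, and smooth plane cubics.
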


Combining our technique with classification results we also obtain:

\begin{corollary}\label{cor:cubic}
Let $f\colon\PP^n\to\PP^n$ be an endomorphism, and let $D\subset\PP^n$ be a totally invariant prime
divisor. Then $\deg D\neq3$.
\end{corollary}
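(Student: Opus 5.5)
We argue by contradiction: let $D\subset\PP^n$ be a totally invariant prime divisor with $\deg D=3$. For $n\le 3$ the statement is known: for $n=2$ we have $\deg D=n+1$, excluded by \cite[Theorem~2.1]{HN11}, and for $n=3$ we have $\deg D=n$, excluded by \cite[Theorem~1.1]{Hoe17}. So we assume $n\ge 4$. A smooth cubic of dimension at least two admits no endomorphism by Beauville \cite{Beauville2001}, while $f|_D$ is a polarised endomorphism of $D$. Hence $D$ is singular, and by the stronger form of Theorem~\ref{thm:main} stated in the abstract, $\sing D$ has an irreducible component $S$ with $\dim S=n-2$.

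\emph{Step 1: $S$ is a linear space.} Cut with a general plane $P\cong\PP^2$. By Bertini, $D\cap P$ is an irreducible plane cubic curve, and it is singular at each of the $\deg S$ points of $S\cap P$. An irreducible plane cubic has at most one singular point, since a line through two double points would meet the curve with multiplicity $4>3$. Hence $\deg S=1$, so $S=\Pi$ is a codimension-two linear subspace, and $D$ has multiplicity exactly $2$ along $\Pi$. Choose coordinates with $\Pi=\{x_0=x_1=0\}$. Then
\[ F \;=\; c(x_0,x_1)+\sum_{i\ge 2} x_i\, q_i(x_0,x_1), \]
where $c$ is a binary cubic and the $q_i$ are binary quadrics. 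In particular projection from $\Pi$ exhibits $D$ as birationally a $\PP^{n-2}$-fibration over $\PP^1$. Its normalization $\nu\colon\tilde D\to D$ is an explicit rational scroll, and the conductor lies over $\Pi$.

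\emph{Step 2: dynamics of $\Pi$.} The singular locus is preserved by $f$: a point of $f^{-1}(\sing D)$ lying in $D\setminus\sing D$ would contradict total invariance, which locally reads $f^*F = u\cdot F^{m}$ for a unit $u$ and some multiplicity $m$. Since $\Pi$ is the only $(n-2)$-dimensional component of $\sing D$, after replacing $f$ by an iterate we get $f^{-1}(\Pi)=\Pi$. Thus $\Pi$ is a totally invariant subvariety of $f|_D$. The endomorphism then lifts to $\tilde f\colon\tilde D\to\tilde D$, and the conductor divisor $C\subset\tilde D$ is totally invariant for $\tilde f$.

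\emph{Step 3: contradiction (main obstacle).} We apply the ramification formula to $\tilde f$ with the totally invariant divisor $C+\nu^{-1}(H)$, where $H=D\cap\PP^{n-1}$ runs over components coming from other totally invariant data if present. By the standard argument, $-(K_{\tilde D}+C)$ must be pseudo-effective, and in fact numerically trivial along the invariant part. On the other hand, adjunction for the non-normal cubic gives $K_{\tilde D}+C=\nu^*(K_{\PP^n}+D)|_D=\nu^*\sO(3-n-1)$, which is anti-ample for $n\ge 3$. So the crude inequality does not suffice. We instead compare with the totally invariant hyperplane class. The fibration $\tilde D\to\PP^1$ given by projection from $\Pi$ is canonical, since it is defined by $\sing D$; hence it is $\tilde f$-equivariant, with the induced endomorphism of $\PP^1$ of degree $q$. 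The branch locus of this fibration is determined by the binary forms $c,q_i$. We expect a degree count on $\PP^1$ to show that the totally invariant ramification forces the discriminant points to be totally invariant under a degree-$q$ map of $\PP^1$. Since at most two points of $\PP^1$ can be totally invariant, this forces the binary forms to be monomials. In that case $D$ becomes reducible or a cone over a lower-dimensional cubic, and the latter reduces to the case $n\le 3$ by induction. This final case analysis is the step I am least sure of, and it is where the detailed structure of $c$ and the $q_i$ must be exploited.
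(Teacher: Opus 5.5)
Steps~1 and~2 are essentially sound, with one caveat. The inclusion $f^{-1}(\sing D)\subseteq\sing D$ does not follow from the local identity $F\circ f=u\,F^m$ alone: at a smooth point of $D$ the right-hand side also lies in $\mm^2$ once $m\ge 2$. You need Giraldo--Roeder, i.e.\ Lemma~\ref{lem:backward}.

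Your normal form in fact gives more than you extract from it. The $q_i$ lie in the three-dimensional space of binary quadrics, so after a linear change of $x_2,\dots,x_n$ the form $F$ involves at most five variables. Hence $D$ is a cone if $n\ge 5$. If $n=4$ and $D$ is not a cone, absorbing $c$ into $x_2,x_3,x_4$ gives $F=x_0^2x_2+x_0x_1x_3+x_1^2x_4$, which is the cubic $F_5$ of Proposition~\ref{prop:F5-not-totally-invariant}. So everything reduces to excluding cones and this single threefold, and that is exactly what your Step~3 does not do.

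The mechanism you propose cannot work. For $F_5$ the $q_i$ are already monomials, yet $D$ is irreducible and not a cone, so ``monomial forms imply reducible or cone'' is false. Moreover $F_5$ is invariant under a copy of $GL_2$ acting on $(x_0,x_1)$ and contragrediently, via $\mathrm{Sym}^2$, on $(x_2,x_3,x_4)$. This group acts transitively on the base $\PP^1$ of your fibration, so there are no discriminant points on $\PP^1$ for a degree count to force into total invariance. Even the $\tilde f$-equivariance of the projection from $\Pi$ is not automatic, since $f$ is not linear; it would need a separate argument.

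The cone case also does not reduce ``by induction'' for free. You need $f$ to descend along the projection from the vertex, which is the content of \cite[Theorem~1.3]{Mabed2023}; the paper uses it together with a counterexample of minimal $n$.

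The distinguished locus lives in $\Pi\cong\PP^2$, not on $\PP^1$. It is the smooth conic $\Gamma=\{x_3^2=4x_2x_4\}\subset\Pi$, the discriminant of the binary quadric $a^2x_2+abx_3+b^2x_4$, i.e.\ the branch curve of the double cover $C\to\Pi$ where the two sheets of $\tilde D$ meet. Along $\Gamma$ the threefold $D$ has pinch points, and along $\Pi\setminus\Gamma$ it has normal crossings of rank two.

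The missing idea is to prove that $\Gamma$ is totally invariant for $f|_\Pi$, which the paper does with Lemma~\ref{lem:cross-point}. Suppose $f(p)=z$ with $p\in\Pi\setminus\Gamma$ and $z\in\Gamma$. Then $F\circ f=cF^q$ gives $b_{F,z}\mid b_{F^q,p}$. But $-\tfrac32$ is a root of $b_{F,z}=(s+1)^2(s+\tfrac32)$, while all roots of $b_{F^q,p}=\prod_{k=0}^{q-1}(s+1-\tfrac kq)^2$ are $\ge -1$ (Example~\ref{example:BSpolynomials}). Hence $\Gamma$ is a smooth conic totally invariant under an endomorphism of $\PP^2$, which contradicts \cite{CerveauLinsNeto2000}.
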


As a consequence we prove
Conjecture~\ref{conj:linear} in dimension four.

\begin{corollary}\label{cor:linear-p4}
Let $f\colon\PP^4\to\PP^4$ be an endomorphism,
and let $D\subset\PP^4$ be a totally invariant prime divisor. Then $D$ is
a hyperplane.
\end{corollary}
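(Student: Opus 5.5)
The statement to prove is the last one displayed, Corollary~\ref{cor:linear-p4}. I will reduce it to the earlier results via the known degree exclusions. Let $D\subset\PP^4$ be a totally invariant prime divisor of degree $d$.

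\textbf{Cases already covered.}
\begin{itemize}
\item The case $d=1$ is the claim itself.
\item Degree $n+1=5$ is excluded by \cite[Theorem~2.1]{HN11}.
\item Degree $n=4$ is excluded by \cite[Theorem~1.1]{Hoe17}.
\item Degree $3$ is excluded by Corollary~\ref{cor:cubic}.
\end{itemize}
So it remains to rule out $d=2$ and $d\ge 6$.

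\textbf{Degree two.} A prime quadric is a cone over a smooth quadric of lower dimension, or is smooth. If it is smooth or normal, Corollary~\ref{cor:intro-pn} forces it to be a hyperplane, a contradiction. An irreducible quadric in $\PP^4$ has rank at least $3$. Its singular locus is then a linear space of dimension $4-\text{rank}\le 1$, so it has codimension $\geq 2$ in the threefold $D$. Hence $D$ is normal (it is a hypersurface, so $S_2$ by Serre's criterion) and is excluded.

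\textbf{Degree at least six.} I would use the standard argument that a totally invariant divisor has the following property: the ramification divisor $R_f$ contains $(q-1)D$. Applying Riemann--Hurwitz, $K_{\PP^4}=f^*K_{\PP^4}+R_f$, gives $-5H\cdot(1-q)\geq (q-1)\deg D\, H$. Thus $\deg D\le n+1=5$. This is a classical bound (it holds for totally invariant hypersurfaces in general, with the sum of degrees at most $n+1$), and it settles all $d\geq 6$. Combining the cases, $d=1$.

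\textbf{Main obstacle.} The hard input is Corollary~\ref{cor:cubic}, which we are allowed to assume. What must actually be checked here is the normality of irreducible quadrics in $\PP^4$, i.e.\ that the singular locus of $D$ never has codimension one in $D$. This holds because a rank-$2$ quadric is reducible.
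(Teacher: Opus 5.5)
Your proof is correct and follows the paper's argument: degree $4$ is excluded by \cite[Theorem~1.1]{Hoe17}, degree $3$ by Corollary~\ref{cor:cubic}, and degree $2$ because an irreducible quadric is normal, so Corollary~\ref{cor:intro-pn} applies. The only difference is cosmetic: the paper takes $d\le 4$ directly from \cite[Theorem~2.1]{HN11}, whereas you first derive $d\le 5$ yourself via Riemann--Hurwitz and $f^*D=qD$, and then use \cite{HN11} only to exclude $d=5$.
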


\subsection{Strategy of proof}

The starting point is our paper 
\cite{BroustetHoring2014} where we showed that in the situation of Theorem \ref{thm:main} the pair $(X,D)$ is log-canonical. The proof relies on general techniques from the MMP for pairs introduced in this context by Nakayama \cite{Nak23} and does not really use that the ambient space $X$ is smooth.
By contrast, we show in this paper that when $X$ is smooth and $D$ is singular, then $D$ does not have rational singularities. In particular the pair $(X,D)$ is not purely log terminal. 
The key idea is to replace general invariants like the log-canonical threshold with the finer information provided by the roots of the Bernstein-Sato polynomial of a hypersurface
in a manifold (cf. Definition \ref{defn:bs-global}).

With this idea in mind we use a number of results from singularity theory:
a theorem of Giraldo and Roeder \cite{GiraldoRoeder2020} guarantees that the singular locus $D_{\sing}$ is also totally invariant. 
By a theorem of Fakhruddin \cite{Fakhruddin2003} the periodic points
of $f|_{D_{\sing}}$ are dense in $D_{\sing}$, so (up to taking an iterate of $f$) we can choose a point $x \in D_{\sing}$ such that $f(x)=x$.
Now a root-theoretic argument using in a crucial way that $D$ is totally invariant shows
that the Bernstein-Sato polynomial in every point $x \in D$ is equal $(s+1)^{d_x}$ for some $d_x \in \N$.
Classical results on the Steenbrink spectrum and a theorem of A'Campo \cite{ACampo1973} 
then yield the main result. 

In the proof of Corollary~\ref{cor:cubic}, the main step is to show by contradiction that a non-normal cubic threefold $D \subset \PP^4$
cannot be $f$-totally invariant. 
A theorem of Lee, Park and Schenzel \cite{LPS11} shows that $D$ is a cone or a special cubic threefold $F_5$. A result of Mabed \cite{Mabed2023} excludes the first case,
in the second case the non-normal locus of $D$ is a linear space $\PP^2$
that is also $f$-totally invariant. We then show that the induced endomorphism
$\holom{f|_{D_{\sing}}}{\PP^2}{\PP^2}$ leaves a smooth conic totally invariant,
a contradiction to \cite{CerveauLinsNeto2000}.

{\bf Acknowledgements.} We thank Alex Dimca for helpful communications.

The first author acknowledges the support of the CDP C2EMPI, together with the French State under the France-2030 programme, the University of Lille, the Initiative of Excellence of the University of Lille, the European Metropolis of Lille for their funding, and support of the Labex CEMPI (ANR-11-LABX-0007-01) and the R-CDP-24-004-C2EMPI project.

The second-named author was supported  by the France 2030 investment plan managed by the ANR, as part of the Initiative of Excellence of Universit\'e C\^ote d'Azur, reference ANR-15-IDEX-01. 
He was also supported by the ANR-DFG project ``Positivity on K-trivial varieties'', ANR-23-CE40-0026 and DFG Project-ID 530132094,
and the ANR-Project ``Groups in algebraic geometry'' ANR-24-CE40-3526.

\section{Notations and basic results}

We work over the complex field $\C$, for general definitions we refer to Hartshorne's book \cite{Har77}.
We will use standard terminology and results 
of the minimal model program (MMP) as explained in \cite{KM98}. 
A variety is an integral scheme of finite type over $\C$, complex manifolds are always supposed to be connected.

\begin{lemma}\label{lem:m=q}
Let $X$ be a projective manifold of dimension $n$, and let $f\colon X\to X$ be a polarised endomorphism with $f^*L\simeq L^{\otimes q}$ for an ample line bundle $L$ and $q>1$.
Let $D\subset X$ be a reduced divisor such that each irreducible component is totally invariant.
Then $f^*D = q D$.
\end{lemma}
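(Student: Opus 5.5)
We reduce to a single irreducible component. Write $D=\sum_i D_i$ with the $D_i$ prime. Since $f^{-1}(D_i)=D_i$ set-theoretically, the pull-back $f^*D_i$ is supported on $D_i$, so $f^*D_i = m_i D_i$ for some positive integer $m_i$, namely the ramification index of $f$ along $D_i$. It therefore suffices to show $m_i=q$ for each $i$; summing then gives $f^*D=qD$.

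The plan is to compare two intersection-theoretic computations, using $n=\dim X$.

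\emph{Degree of $f$.} Since $f$ is finite and $f^*L\simeq L^{\otimes q}$, the projection formula gives
\[
\deg(f)\, L^n = (f^*L)^n = q^n L^n .
\]
As $L^n>0$, we get $\deg f = q^n$.

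\emph{The set-theoretic equality $f^{-1}(D_i)=D_i$.} It means $f|_{D_i}\colon D_i\to D_i$ is finite and surjective, and $f^*D_i$ has the single component $D_i$. The projection formula for the push-forward of cycles gives
\[
f_*(f^*D_i)=\deg(f)\, D_i ,
\]
while $f_*(m_iD_i)=m_i\deg(f|_{D_i})\,D_i$. Hence $m_i\deg(f|_{D_i}) = q^n$.

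\emph{Degree of $f|_{D_i}$.} Since $(f|_{D_i})^*(L|_{D_i})\simeq L|_{D_i}^{\otimes q}$, applying the projection formula on the $(n-1)$-dimensional variety $D_i$ gives
\[
\deg(f|_{D_i})\,(L^{n-1}\cdot D_i) = q^{n-1}(L^{n-1}\cdot D_i).
\]
Because $L$ is ample, $L^{n-1}\cdot D_i>0$, so $\deg(f|_{D_i})=q^{n-1}$. Combining with the previous step yields $m_i=q$.

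The only step needing care is the identity $f_*f^*D_i=\deg(f)D_i$ for a finite surjective morphism between smooth varieties, together with the identification of $f_*[D_i]$ with $\deg(f|_{D_i})[D_i]$. Both are standard consequences of the projection formula (Fulton), and nothing depends on $D$ being reduced beyond making $D_i$ prime. An alternative route to $f^*D_i\equiv qD_i$ would go through an eigenvalue argument on $N^1(X)$, but the degree count above is direct and avoids it.
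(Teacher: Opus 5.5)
Your proof is correct, but it splits the degree count differently from the paper. The paper never computes $\deg(f|_{D_i})$. It intersects $f^*D=mD$ with $L^{n-1}$ and uses the projection formula to move $f$ onto the curve class. It then uses Beauville's identity $f_*f^*=\deg(f)$ on one-dimensional cycle classes to get $f_*L^{n-1}=f_*f^*(q^{-(n-1)}L^{n-1})=qL^{n-1}$. This yields $m(D\cdot L^{n-1})=q(D\cdot L^{n-1})$ in a single chain of equalities.

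You instead apply $f_*f^*=\deg f$ to the divisor $D_i$ itself. This is the ramification formula $m_i\cdot\deg(f|_{D_i})=\deg f=q^n$ at the generic point of $D_i$. You then get $\deg(f|_{D_i})=q^{n-1}$ by repeating the top self-intersection count on the polarised restriction $f|_{D_i}$, which is the content of Lemma~\ref{lem:restrictpolarised}.

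What your version buys: the by-product $\deg(f|_{D_i})=q^{n-1}$. It also uses $f_*f^*=\deg f$ only in codimension one, where it is an elementary fact about a finite extension of discrete valuation rings, so the argument would go through verbatim for normal $X$. The paper's version is shorter once Beauville's lemma is available.
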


\begin{proof}
On the one hand the morphism $f$ is finite, so  $(f^*L)^n=(\deg f)L^n$. 
On the other hand one has
\[
 (f^*L)^n = (L^{\otimes q})^n = q^n L^n.
\]
Since $L^n>0$ for an ample divisor we obtain $\deg f=q^n$.
Thus we have  $f_*\bigl(f^*(L^{n-1})\bigr)= q^n L^{n-1}$ by \cite[Lemma 1]{Beauville2001}.

{\em Assume that $D$ is a prime divisor.}
Since $f^* D$ has support on $D$ we have $f^*D=m D$ for some $m \in \N$. 
Intersecting with $L^{n-1}$ and using the projection formula, we obtain
\[
 m(D\cdot L^{n-1})=(f^*D\cdot L^{n-1})=(D\cdot f_*L^{n-1})
=  (D\cdot f_* f^*(\frac{1}{q^{n-1}} L^{n-1}))
 =q(D\cdot L^{n-1}).
\]
Since $D$ is effective and $L$ is ample, we have $(D\cdot L^{n-1})>0$. Therefore $m=q$.

{\em Conclusion.} If $D=\sum D_i$ is the decomposition into irreducible components 
we have $f^* D_i = m_i D_i$ since each component is totally invariant by assumption.
Yet we have just shown that $m_i=q$ is independent of the component.
\end{proof}

\begin{lemma}\label{lem:restrictpolarised}
Let $X$ be a projective manifold of dimension $n$, and let $f\colon X\to X$ be a polarised endomorphism with $f^*L\simeq L^{\otimes q}$ for an ample line bundle $L$ and $q>1$.
Let $Z\subset X$ be a subvariety such that $f(Z)=Z$.
Then the restriction $f|_Z\colon Z\to Z$ is a polarised endomorphism.
\end{lemma}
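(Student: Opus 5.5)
We need to show that $f|_Z\colon Z\to Z$ is a finite surjective morphism and that $(f|_Z)^*(L|_Z)\simeq (L|_Z)^{\otimes q}$, with $L|_Z$ ample and $f|_Z$ non-invertible.

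Since $f(Z)=Z$, the restriction $f|_Z$ is a well-defined morphism $Z\to Z$, and it is surjective. Finiteness is inherited from $f$: the restriction of a finite morphism to a closed subvariety is finite, because the composition of the closed immersion $Z\hookrightarrow X$ with $f$ is finite, and it factors through the closed subscheme $Z$ of the target. The polarisation is formal. The restriction $L|_Z$ of an ample line bundle to a closed subvariety is ample, and
\[
(f|_Z)^*(L|_Z)\simeq (f^*L)|_Z\simeq (L^{\otimes q})|_Z=(L|_Z)^{\otimes q},
\]
by functoriality of pullback with respect to the commutative square formed by $f|_Z$ and the inclusions.

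The only point needing an argument is that $f|_Z$ is not an isomorphism, since an endomorphism is by definition non-invertible. Let $d=\dim Z$. As $f|_Z$ is finite and surjective, it has a degree, and the projection formula gives
\[
\bigl((f|_Z)^*L|_Z\bigr)^d=\deg(f|_Z)\,(L|_Z)^d .
\]
The left-hand side equals $q^d (L|_Z)^d$. Because $(L|_Z)^d=L^d\cdot Z>0$, we get $\deg(f|_Z)=q^d$. This argument is the same as in Lemma \ref{lem:m=q}.

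If $d\ge1$, then $q^d>1$, so $f|_Z$ is not birational and hence not an isomorphism. If $d=0$, then $Z$ is a point; this degenerate case is excluded or trivial depending on convention, and I would note it, presumably assuming $\dim Z>0$.

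The main (minor) obstacle is making sure $Z$ is treated with its reduced structure, so that the degree formula applies to an integral variety. Apart from that, the proof is routine.
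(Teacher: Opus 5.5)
Your proof is correct and is essentially the paper's argument: the paper's proof consists only of the chain $(f|_Z)^*(L|_Z)\simeq (f^*L)|_Z\simeq (L|_Z)^{\otimes q}$ together with the ampleness of $L|_Z$. Your extra checks are valid details the paper leaves implicit: finiteness, and $\deg(f|_Z)=q^{\dim Z}>1$ for non-invertibility when $\dim Z\geq 1$, with the point case being degenerate.
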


\begin{proof}
We have
\[
 (f|_Z)^*(L|_Z)\simeq (f^*L)|_Z\simeq (L^{\otimes q})|_Z\simeq (L|_Z)^{\otimes q},
\]
and $L|_Z$ is ample. Thus $f|_Z$ is polarised.
\end{proof}

We start with an immediate consequence of a theorem of Giraldo and Roeder \cite{GiraldoRoeder2020}:

\begin{lemma}\label{lem:backward}
Let $X$ be a complex manifold, and let $f\colon X\to X$ be a finite surjective morphism.
Let $D\subset X$ be a reduced hypersurface that is totally invariant for $f$.
Then
\[
 f^{-1}(D_{\sing})\subseteq D_{\sing}.
\]
\end{lemma}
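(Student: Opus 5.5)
We argue pointwise: fix $y\in X$ with $f(y)=x\in D_{\sing}$ and show $y\in D_{\sing}$. Since $f^{-1}(D)=D$ and $x\in D$, we get $y\in D$. So the only issue is to exclude the possibility that $D$ is smooth at $y$ while $D$ is singular at $f(y)$. This is exactly the local statement proved by Giraldo and Roeder \cite{GiraldoRoeder2020}: for a finite (equivalently proper, open, finite-to-one) holomorphic map germ $f\colon (X,y)\to (X,x)$ between complex manifolds with $f^{-1}(D)=D$ near $y$, if the germ $(D,y)$ is smooth then the germ $(D,x)$ is smooth. The plan is to recall this statement, check that its hypotheses hold in our setting, and conclude by contraposition.

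To check the hypotheses, note that $f$ is finite and surjective, so it is open and has finite fibres. Choose small neighbourhoods $U\ni y$ and $V\ni x$ with $f|_U\colon U\to V$ proper and $U\cap f^{-1}(x)=\{y\}$. Total invariance then gives $(f|_U)^{-1}(D\cap V)=D\cap U$ as sets. This is the local input the Giraldo--Roeder theorem needs.

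For intuition, and as a fallback if one prefers a self-contained argument, the mechanism is as follows. In local coordinates let $h$ be a reduced equation of $D$ at $x$ and $g$ a local coordinate with $D=\{g=0\}$ at $y$. Then $h\circ f = u\, g^m$ with $u$ a unit. Since $D$ is smooth at $y$, $f$ restricts to a finite surjective map from the smooth germ $(D,y)$ onto $(D,x)$. Branched-cover arguments, using purity of the branch locus together with the fact that the preimage of $D$ is the single smooth sheet, then show that $(D,x)$ is a quotient-like image which must itself be smooth. Making this fully rigorous is the delicate part: an image of a smooth germ under a finite map need not be smooth in general, and it is precisely the total invariance of the hypersurface that forces smoothness. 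This is the content of \cite{GiraldoRoeder2020}, so I expect the main obstacle to be only the careful matching of the hypotheses, namely the localisation to a proper germ and the set-theoretic preimage condition. Once that is done the lemma follows immediately.
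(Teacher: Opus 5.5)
Your proposal is correct and is essentially the paper's proof: localise at a point $y\in f^{-1}(x)$ with $x\in D_{\sing}$, use total invariance to identify the germ of $f^{-1}(D)$ at $y$ with $(D,y)$, and apply \cite[Thm.A]{GiraldoRoeder2020} to the finite germ of $f$ at $y$. The heuristic ``fallback'' paragraph is not a rigorous argument and is not needed for the lemma, so you can drop it.
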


\begin{proof}
Let $x\in D_{\sing}$ and let $y\in f^{-1}(x)$.
Choose analytic neighborhoods of $x$ and $y$ and let $h$ be a reduced local equation of $D$ at $x$.
The germ $(D,x)$ is singular, and its inverse image under the finite germ of $f$ at $y$ is precisely the germ $(D,y)$ because $f^{-1}(D)=D$ set-theoretically.
By \cite[Thm.A]{GiraldoRoeder2020} the germ $(D,y)$ is singular.
\end{proof}

\begin{proposition}\label{prop:periodic-component}
Let $X$ be a projective manifold, and let $f\colon X\to X$ be a finite surjective morphism.
Let $D\subset X$ be a reduced hypersurface that is totally invariant for $f$.

Then there exists an iterate $f^N$ such that every irreducible component of $D$ and
every irreducible component of $D_{\sing}$ is totally invariant for $f^N$.
\end{proposition}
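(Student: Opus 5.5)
The plan is to reduce everything to a finite-permutation argument. We show that $f$ permutes the irreducible components of $D$, and also permutes the irreducible components of $D_{\sing}$ of each fixed dimension. Some iterate then fixes each component, and a fixed component of a totally invariant set is totally invariant.

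\emph{Step 1: components of $D$.} Since $f$ is finite and surjective it is open in the analytic sense; more simply, it is equidimensional and maps closed subvarieties of dimension $k$ onto closed subvarieties of dimension $k$. Write $D=D_1\cup\dots\cup D_r$. From $f^{-1}(D)=D$ and surjectivity we get $f(D)=D$, so each $f(D_i)$ is an irreducible closed subset of $D$ of dimension $n-1$, hence equal to some component $D_{\sigma(i)}$. Conversely, $f^{-1}(D_j)$ is a hypersurface contained in $D$, so it is a union of components $D_i$. Each of these satisfies $f(D_i)\subseteq D_j$, hence $f(D_i)=D_j$ by dimension. Since $f(f^{-1}(D_j))=D_j$ is nonempty, $\sigma$ is surjective, and therefore a permutation of a finite set. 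Taking $N_1=r!$ gives $\sigma^{N_1}=\mathrm{id}$. For $g=f^{N_1}$ we have $g^{-1}(D_j)=\bigcup_{\sigma^{N_1}(i)=j}D_i=D_j$, so each $D_j$ is $g$-totally invariant.

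\emph{Step 2: components of $D_{\sing}$.} The iterate $g=f^{N_1}$ is finite and surjective, and $D$ is $g$-totally invariant, so Lemma \ref{lem:backward} gives $g^{-1}(D_{\sing})\subseteq D_{\sing}$. Since $g$ is surjective, $D_{\sing}=g(g^{-1}(D_{\sing}))\subseteq g(D_{\sing})$.

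We now argue by descending dimension. Let $k$ be the maximal dimension of a component of $D_{\sing}$, and let $Z_1,\dots,Z_s$ be the components of dimension $k$. Each $g(Z_i)$ is irreducible of dimension $k$. The union of the $g(Z_i)$ together with the images of the lower-dimensional components covers $D_{\sing}$, so every $Z_j$ equals some $g(Z_i)$. Counting then shows that $Z_i\mapsto g(Z_i)$ is a permutation of $\{Z_1,\dots,Z_s\}$, and in particular $g(Z_i)\subseteq D_{\sing}$.

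Next, $g^{-1}(Z_j)\subseteq D_{\sing}$ is equidimensional of dimension $k$, since $g$ is finite and surjective. Every component of $g^{-1}(Z_j)$ is therefore a $k$-dimensional irreducible subset of $D_{\sing}$, hence one of the $Z_i$, and it maps onto $Z_j$. After a further iterate making this permutation trivial, $g^{-1}(Z_j)=Z_j$.

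\emph{Step 3: lower-dimensional components.} For the lower-dimensional components we proceed inductively. Let $W$ be the union of the components of $D_{\sing}$ of dimension $<k$, after removing those contained in the top-dimensional ones. The top-dimensional union $Z=\bigcup Z_i$ satisfies $g^{-1}(Z)=Z$, and $g^{-1}(D_{\sing})\subseteq D_{\sing}$. Hence the preimage of the closure of $D_{\sing}\setminus Z$ lies in $D_{\sing}$, and its components not contained in $Z$ map into that closure. Iterating the counting argument dimension by dimension, again using that $g$ is finite and surjective and preserves dimensions, makes each component permuted and then fixed by a further iterate. Taking $N$ to be the product of all the orders involved completes the proof.

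\emph{Main obstacle.} The hardest part is Step 3: showing that lower-dimensional components of $D_{\sing}$ are permuted rather than mapped into higher-dimensional components. This is exactly where Lemma \ref{lem:backward} enters. If the bookkeeping becomes awkward, the fallback is to work with the finite collection $\mathcal{S}$ of all irreducible components of $D_{\sing}$ and argue by cardinality. For each $T\in\mathcal{S}$, every component of $g^{-1}(T)$ is an irreducible subset of $D_{\sing}$ of the same dimension as $T$ that is not contained in a larger component preimage. This yields a surjective, hence bijective, self-map of $\mathcal{S}$.
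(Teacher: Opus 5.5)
Your Steps 1 and 2 are correct. Step 2 argues through forward images and $D_{\sing}\subseteq g(D_{\sing})$, whereas the paper counts preimages, but both work. One small point: equidimensionality of $g^{-1}(Z_j)$ needs flatness of $g$, which holds because $X$ is smooth (Remark~\ref{remark:dim}); finiteness and surjectivity alone do not give it.

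The gap is Step 3, the part you yourself call the main obstacle, which you never actually prove. Take a component $T$ of $D_{\sing}$ of non-maximal dimension $d$. Lemma~\ref{lem:backward} and equidimensionality only tell you that each component $V$ of $g^{-1}(T)$ is a $d$-dimensional irreducible subset of $D_{\sing}$. Since $d$ is no longer the top dimension, $V$ could a priori be a proper subvariety of a higher-dimensional component of $D_{\sing}$. Then $V$ is not among the components, the counting argument collapses, and no iterate gives $g^{-1}(T)=T$. Your sentence about the components ``not contained in $Z$'' discards such $V$ instead of ruling them out. The fallback's phrase ``not contained in a larger component preimage'' is exactly the statement that needs proof. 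Lemma~\ref{lem:backward} cannot supply it, since it only gives containment in $D_{\sing}$.

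The missing idea is to run a genuine descending induction in which every component of dimension $>d$ has already been made $g$-totally invariant, and then to use that invariance. Suppose $V\subsetneq T'$ for a component $T'$ with $\dim T'>d$. Then $g(V)=T$ by dimension, and $g(T')=T'$ because $g^{-1}(T')=T'$ and $g$ is surjective. Hence $T=g(V)\subseteq g(T')=T'$, contradicting that $T$ is an irreducible component of $D_{\sing}$. So every component of $g^{-1}(T)$ is itself a $d$-dimensional component of $D_{\sing}$ mapping onto $T$, and your Step 2 counting applies verbatim to the components of dimension $d$.

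In your own notation, the same argument with $T'=Z$ shows $g^{-1}(W)\subseteq W$. You can then rerun Step 2 with $W$ in place of $D_{\sing}$ and recurse. This is precisely the claim in the paper's induction step.
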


\begin{remark} \label{remark:dim}
The proof will use the following basic fact:
let $f\colon X\to X$ be a finite surjective morphism between complex manifolds, and let $Z \subset X$
be an irreducible subvariety. Then every irreducible component of $\fibre{f}{Z}$ has dimension equal to $\dim Z$. Indeed the morphism $f$ is flat and so is its base change $\holom{f|_{\fibre{f}{Z}}}{\fibre{f}{Z}}{Z}$. Thus we can apply \cite[III, Cor.9.6]{Har77}.
\end{remark}

\begin{proof}[Proof of Proposition \ref{prop:periodic-component}]
The map $f$ acts by permutation on the irreducible components of $D$, so it is clear
that up to replacing $f$ by some $f^N$ each irreducible component is totally invariant.

Let $d_1>\cdots>d_s$ be the dimensions of the irreducible components of $D_{\sing}$.
We argue by descending induction on these dimensions, replacing $f$ by a suitable iterate at each step.

{\em Start of the induction.}
Let ${\mathcal S}_{d_1}=\{Z_1,\dots,Z_k\}$ be the set of irreducible components of $D_{\sing}$ of dimension $d_1$. By Lemma \ref{lem:backward}
every irreducible component of $f^{-1}(Z_i)$ is contained in $D_{\sing}$. By Remark \ref{remark:dim} it has the maximal dimension $d_1$ and therefore determines an element of ${\mathcal S}_{d_1}$.
Since ${\mathcal S}_{d_1}$ has only finitely many elements this shows that
$f^{-1}(Z_i)$ is irreducible for every $i \in \{1, \ldots, k\}$ and
$f$ acts by permutation on ${\mathcal S}_{d_1}$.
Replacing $f$ by a suitable iterate, we may therefore assume that every element of ${\mathcal S}_{d_1}$ is totally invariant.

{\em Induction step.}
Suppose now that for some $\ell<s$  every element of ${\mathcal S}_{d_1}, \ldots, {\mathcal S}_{d_l}$ is totally invariant
for $f$ (up to replacing by an iterate).
Let ${\mathcal S}_{d_{\ell+1}}=\{Z_1,\dots,Z_k\}$ be the set of irreducible components of $D_{\sing}$ of dimension $d_{\ell+1}$.

Fix $i \in \{1, \ldots, k\}$, and let $W$ be an irreducible component of $f^{-1}(Z_i)$.
By Lemmas~\ref{lem:backward} and Remark \ref{remark:dim} the variety $W$ is contained in $D_{\sing}$ 
and has dimension $d_{\ell+1}$.
We claim that $W$ is itself an irreducible component of $D_{\sing}$.

{\em Proof of the claim.}
If $W$ is properly contained in an irreducible component $T$ of $D_{\sing}$, then necessarily $\dim T>d_{\ell+1}$, so $T$ is totally invariant by the induction hypothesis.
So $W\subseteq T = \fibre{f}{T}$  would imply
$$
Z_i=f(W)\subseteq f(T)=T.
$$
Yet $Z_i$ and $T$ are both irreducible components of $D_{\sing}$, so we get a contradiction.

By the claim every irreducible component of $f^{-1}(Z_i)$ belongs to ${\mathcal S}_{d_{\ell+1}}$.
Since ${\mathcal S}_{d_{\ell+1}}$ has only finitely many elements this shows that
$f^{-1}(Z_i)$ is irreducible for every $i \in \{1, \ldots, k\}$
and we can conclude as above.
\end{proof}

\section{Singularity theory}

The theory of Bernstein-Sato polynomials has a rich history with many deep contributions, e.g. by Kashiwara \cite{Kashiwara1976}, Malgrange \cite{Mal74} and Saito \cite{Saito1993}.
For our purpose we will use only rather elementary properties which we now recall following the easily accessible survey papers of Budur \cite{Budur2012} and Popa \cite{Popa2021}.

\begin{definition} \label{defn:bs-global}
Let $U$ be a complex manifold, and let $f\in \cO_{U}$ be a non-invertible holomorphic function.
The Bernstein--Sato polynomial $b_{f}(s)\in \CC[s]$ is the monic polynomial of minimal degree for which there exists
$P(s)\in \mathscr{D}_{U}[s]$ such that the relation 
\[
 P(s)\,f^{s+1}=b_{f}(s)\,f^s
\]
holds in the $\mathscr{D}_{U}$-module $\cO_{U}[f^{-1}, s]\cdot f^s$.
\end{definition}

\begin{remark} \label{remark:bs-global}
The polynomials $b(s)$ satisfying an identity as in Definition \ref{defn:bs-global} form an ideal
in $\C[s]$, the Bernstein-Sato polynomial is the unique monic generator of this principal ideal \cite[Defn.2.2.3]{Popa2021}.
\end{remark}

If we fix a point $x \in U$ we can always find a neighbourhood $U' \subset U$ such that 
$b_{f, U'}(s)$ divides $b_{f, V}(s)$ for every neighbourhood $x \in V \subset U'$ \cite[Lemma 2.2.4]{Popa2021}. Thus we can define:

\begin{definition} \label{defn:bs-local}
Let $U$ be a complex manifold, and let $f\in \cO_{U}$ be a holomorphic function.
Let $x \in U$ be a point such that $f(x)=0$. 
The local Bernstein-Sato polynomial of $f$ at $x$ is
$$
b_{f,x}(s) := b_{f, U'}(s)
$$
where $U' \subset U$ is a sufficiently small neighbourhood of $x$.
\end{definition}

It is classical that $s+1$ divides $b_{f,x}(s)$ and that all roots of $b_{f,x}(s)$ are negative rational numbers \cite{Kashiwara1976}. In special cases the Bernstein-Sato polynomials are well-known

\begin{example} \label{example:BSpolynomials} 
\begin{enumerate}
\item Assume that $f$ has a normal crossings singularity of rank two in $0 \in U \subset \C^n$, i.e., in suitable analytic coordinates we have $f(x)=x_1 x_2$. Then
$$
b_{f,0} = (s+1)^2.
$$
More generally if $f(x)=x_1^q x_2^q$ for some $q \in \N$, then by \cite[Ex.2.2.11]{Popa2021} 
$$
b_{f,0} = \prod_{k=0}^{q-1} (s+1-\frac{k}{q})^2.
$$
\item Assume that $f$ has a pinch point in $0 \in U \subset \C^n$, i.e., in suitable analytic coordinates we have $f(x)=x_1^2 + x_2^2 x_3$. Then by \cite[Section~5.4]{CDNMS2022}
$$
b_{f,0} = (s+1)^2\left(s+\frac{3}{2}\right).
$$ 
\end{enumerate}
\end{example}

\begin{definition} \label{defn:minimal-exponent}
Let $U$ be a complex manifold, and let $f\in \cO_{U}$ be a holomorphic function such that $f(x)=0$
and the divisor $\{ f=0\}$ is reduced in $x$.
The reduced Bernstein--Sato polynomial is
\[
 \widetilde b_{f,x}(s):=\frac{b_{f,x}(s)}{s+1}.
\]
The minimal exponent of $f$ at $x$ is
\[
 \widetilde\alpha_x(f):=-\max\{\rho\in\QQ\mid \rho \text{ is a root of }\widetilde b_{f,x}(s)\}
\]
with the convention $\widetilde\alpha_x(f)=\infty$ if $\widetilde b_{f,x}(s)=1$.
\end{definition}

We will later use the following easy lemma:

\begin{lemma} \label{lemma:special-open}
Let $U$ be an affine manifold, and let $f\in \cO_{U}$ be a holomorphic function
defining a reduced divisor $D \subset U$. Assume that there exists a point $x_0 \in D$ such that
$b_{f,x_0}(s)=(s+1)^d$ for some $d \in \N$. Then there exists an affine neighbourhood $x_0 \subset U' \subset U$
such that for all $x \in D \cap U'$ one has  $b_{f,x}(s)=(s+1)^{d_x}$ for some $d_x \in \N$.
\end{lemma}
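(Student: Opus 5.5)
The plan is to combine the global Bernstein--Sato polynomial on a small neighbourhood with the local divisibility property recalled before Definition \ref{defn:bs-local}. Specifically, I would use: for $x$ in a neighbourhood $V$ of $x_0$, $b_{f,x}(s)$ divides $b_{f,V}(s)$. Indeed $b_{f,x}=b_{f,W}$ for a small neighbourhood $W\subset V$ of $x$. Moreover, any functional equation $P(s)f^{s+1}=b_{f,V}(s)f^s$ on $V$ restricts to $W$. Hence, by Remark \ref{remark:bs-global}, $b_{f,W}$ divides $b_{f,V}$.

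The key steps are as follows.

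\begin{enumerate}
\item By Definition \ref{defn:bs-local}, fix a neighbourhood $U''$ of $x_0$ such that $b_{f,U''}(s)=b_{f,x_0}(s)=(s+1)^d$.
\item Choose an affine neighbourhood $U'\subset U''$ of $x_0$; since $U$ is affine, a principal open subset of $U$ works. (If one insists on the algebraic setting, $b_{f,U'}$ is computed with algebraic differential operators, and localisation to a smaller principal open again only makes the polynomial divide the previous one.)
\item Restricting the functional equation from $U''$ to $U'$ shows that $b_{f,U'}(s)$ divides $(s+1)^d$. Since $s+1$ divides every nontrivial Bernstein--Sato polynomial along $D$, we get $b_{f,U'}=(s+1)^{d'}$ with $1\le d'\le d$.
\item For any $x\in D\cap U'$, the argument in the first paragraph gives $b_{f,x}\mid b_{f,U'}=(s+1)^{d'}$. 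Hence $b_{f,x}=(s+1)^{d_x}$ for some $d_x\in\N$, and $d_x\ge1$ because $f(x)=0$ (Kashiwara's result that $s+1$ divides it).
\end{enumerate}

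The main subtlety is the compatibility between the analytic local definition and the affine/algebraic neighbourhood $U'$ in the statement. This is handled by two facts. First, a functional equation on a larger open set restricts to any smaller one. Second, the local polynomial is attained on arbitrarily small neighbourhoods, by the stabilisation in \cite[Lemma 2.2.4]{Popa2021}. Beyond this, no real obstacle is expected; everything follows from divisibility under restriction together with unique factorisation in $\C[s]$.
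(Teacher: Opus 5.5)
Your proposal is correct and is essentially the paper's argument: both obtain $b_{f,x}\mid b_{f,U'}\mid (s+1)^d$ for every $x\in D\cap U'$, where $U'$ is a neighbourhood of $x_0$ on which the local polynomial $b_{f,x_0}$ is attained. The only difference is that the paper quotes the formula $b_{f,U'}=\mathrm{lcm}_{x\in U'}b_{f,x}$ from Popa's notes, whereas you derive the half of it that is needed (each $b_{f,x}$ divides $b_{f,U'}$) directly, by restricting functional equations to smaller neighbourhoods.
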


\begin{proof}
By Definition \ref{defn:bs-local} there exists a neighbourhood $x_0 \subset U' \subset U$ such that 
$
b_{f, U'}(s) = b_{f,x_0}(s)
$.
Since $U'$ is affine we then know by  \cite[Defn.2.2.6]{Popa2021} that
$$
b_{f, U'}(s) = \mbox{lcm}_{x \in U'} b_{f,x}(s).
$$
Thus $b_{f,x}(s)$ divides $b_{f,x_0}(s) = (s+1)^d$.
\end{proof}

We recall some basic facts on Milnor fibres and the spectrum :

\begin{definition} \label{defn:milnor} \cite[Sect.1.1]{Budur2012}
Let 
\[
        f\colon(\C^r,0)\to(\C,0)
\]
be a holomorphic germ with a singularity at \(0\).  The Milnor fiber of $f$ at the origin is
\[
M_{f, 0} := B_\epsilon\cap f^{-1}(t).
\]
where $B_\epsilon$ is ball of radius $0<\eta \ll 1$ around the origin and $t \in \C$ is a point with $0<|t| \ll \eta$.
\end{definition}

The cohomology groups $H^i(M_{f,0},\C)$ admit an action $T$ called monodromy gener-
ated by going once around a loop starting at $t$ around $0$. This action can be encoded by different invariants

\begin{definition} \label{defn:spectrum} \cite[Sect.4.6]{DMST06}
Let 
\[
        f\colon(\C^r,0)\to(\C,0)
\]
be a polynomial germ with a singularity at \(0\). The Steenbrink spectrum of $f$
at $0$ is
\begin{equation*}
%\label{eqSpectrum}
\mbox{Sp}(f,0)=\sum_{c>0}n_{c,0}(f)\cdot t^c,
\end{equation*}
where  the spectrum multiplicities
$$
n_{c,0}(f):=\sum_{i\in\Z}(-1)^{r-1-i} \dim_\C \mbox{Gr}_F ^{\lfloor r-c \rfloor}\widetilde{H}^{i}(M_{f,0},\C)_{e^{-2\pi ic}}$$ 
record the generalized Euler
characteristic on the $\lfloor r-c \rfloor$-graded piece of the Hodge
filtration on the $\exp (-2\pi ic)$-monodromy eigenspace on the reduced cohomology of the Milnor fiber.
\end{definition}

\begin{remark} \label{remark:cohomology-isolated}
If $f$ has an isolated singularity at the origin, the formulas above can be considerably simplified:
by \cite[Theorem~6.5]{Milnor1968} the Milnor fibre has the homotopy type of a bouquet of \((r-1)\)-spheres and the numbers of spheres is given by the Milnor number $\mu(f)$. In particular 
the only non-zero cohomology groups are
$$
H^0(M_{f, 0} ,\C)\simeq \C, \qquad H^{r-1}(M_{f, 0} ,\C) \simeq \C^{\mu(f)},
$$
so the only non-zero reduced cohomology is 
$$
\wt H^{r-1}(M_{f, 0} ,\C)  \simeq \C^{\mu(f)}.
$$  
Thus we have
\begin{equation}
\label{multiplicity-easy}
n_{c,0}(f) =  \dim \mbox{Gr}_F ^{\lfloor r-c \rfloor}\widetilde{H}^{r-1}(M_{f,0},\C)_{e^{-2\pi ic}}
\end{equation}
\end{remark}

We will use A'Campo's Lefschetz-number theorem in the following form.

\begin{theorem}\label{thm:acampo}
Let $g\colon(\C^r,0)\to(\C,0)$ be a holomorphic germ. If
$g\in\mm_{\C^r,0}^2$, then the Lefschetz number of its local monodromy is
zero:
\[
 L(T_g):=\sum_i(-1)^i\Tr\bigl(T_g\mid H^i(M_{g,0},\C)\bigr)=0.
\]
\end{theorem}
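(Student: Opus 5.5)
A natural plan is A'Campo's original route: realise the monodromy geometrically and compute its Lefschetz number by counting fixed points.

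\emph{Reduction to a local geometric monodromy.} Choose $\epsilon>0$ small and $0<\eta\ll\epsilon$ so that $g\colon B_\epsilon\cap g^{-1}(D_\eta^*)\to D_\eta^*$ is a locally trivial fibration, by Milnor's fibration theorem. Then take an embedded resolution $\pi\colon Y\to(\C^r,0)$ of the germ $g$. We may choose it so that $\pi$ is an isomorphism outside $g^{-1}(0)$ and $(g\circ\pi)^{-1}(0)$ is a simple normal crossings divisor $\sum m_iE_i$. The Milnor fibre $M_{g,0}$ is then identified with a fibre of $g\circ\pi$ near $\pi^{-1}(0)$. Following A'Campo, one constructs a geometric monodromy diffeomorphism $h$ of $M_{g,0}$ adapted to the normal crossings. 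Near a point of $E_i^\circ=E_i\setminus\bigcup_{j\ne i}E_j$ the function is locally $u^{m_i}$. There $h$ acts on the nearby fibre as rotation by $2\pi/m_i$ in the normal direction, which is a fixed-point free map whenever $m_i\ge2$. Near intersections $E_i\cap E_j$ the flow can likewise be chosen fixed-point free, since there the monodromy rotates a torus in the non-trivial direction.

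\emph{Counting fixed points.} A'Campo's formula then gives
\[
L(T_g)=\sum_{i:\,m_i=1}\chi\bigl(E_i^\circ\cap\pi^{-1}(0)\bigr),
\]
where the components with $m_i=1$ contribute because $h$ can be taken to be the identity on the corresponding part of the fibre. The sum runs only over exceptional components lying over $0$.

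\emph{Using the hypothesis $g\in\mm^2$.} We must show that no exceptional divisor over $0$ has multiplicity $1$, or at least that such contributions cancel. Choose $\pi$ so that it factors through the blow-up of the origin. The first exceptional divisor $E_0$ then has multiplicity $\operatorname{mult}_0 g\ge2$. Every later exceptional divisor over $0$ arises by blowing up a centre inside the total transform of $g^{-1}(0)$ lying over $0$. Its multiplicity is at least the sum of the multiplicities of the components containing the centre, so it is at least $2$ by induction. Hence every $E_i$ meeting $\pi^{-1}(0)$ in a contributing stratum has $m_i\ge2$, except possibly strict-transform components of $\{g=0\}$ with $m_i=1$. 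For those, $E_i^\circ\cap\pi^{-1}(0)$ lies inside other exceptional divisors, so the stratum $E_i^\circ$ does not meet $\pi^{-1}(0)$ and contributes nothing. Therefore $L(T_g)=0$.

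\emph{Main obstacle.} The delicate step is the construction of the geometric monodromy with the stated fixed-point behaviour near the crossings, together with the passage from this construction to the Lefschetz formula. This is exactly A'Campo's theorem, so in the paper I would cite \cite{ACampo1973} for the formula $L(T_g)=\sum_{m_i=1}\chi(E_i^\circ\cap\pi^{-1}(0))$ rather than reproduce it. The multiplicity bound $m_i\ge2$ for the exceptional divisors over $0$ is then an elementary check.
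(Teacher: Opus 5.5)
Your plan is correct and is essentially A'Campo's own argument: take an embedded resolution that factors through the blow-up of the origin, so that the fixed-point stratum of multiplicity one inside $\pi^{-1}(0)$ is empty. The paper does not reprove the statement but simply cites \cite[Thm.~1]{ACampo1973}, which is also where you defer the hard fixed-point formula. One small tidy-up: your case distinction skips exceptional components with positive-dimensional image, but these are handled the same way once you write $\pi=\sigma\circ\pi'$ with $\sigma^*\operatorname{div}(g)=mE_0+\widetilde G$ and $m=\operatorname{mult}_0 g\ge 2$, since then every component of $\pi^{-1}(0)=\operatorname{Supp}\pi'^*E_0$ has coefficient at least $m$ in $(g\circ\pi)^*$, so no point of $\pi^{-1}(0)$ lies on a single component of multiplicity one.
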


This is \cite[Thm.~1]{ACampo1973}; %the more general vanishing-cycle statement is \cite[Th\'eor\`eme~1 bis]{ACampo1973}.

\begin{lemma}\label{lem:acampo-trace}
Let $g\colon(\C^r,0)\to(\C,0)$, with $r\geq2$, have an isolated critical
point. If every eigenvalue of the monodromy on
$\widetilde H^{r-1}(M_{g,0},\C)$ is equal to $1$, then $r$ is even and
$\mu(g)=1$. Moreover we can choose local coordinates such that
$g=z_1^2+\cdots+z_r^2$.
\end{lemma}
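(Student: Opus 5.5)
We use A'Campo's theorem together with the Lefschetz number of an isolated singularity, then the Milnor number/Morse lemma. Throughout, the hypothesis that all monodromy eigenvalues on $\wt H^{r-1}(M_{g,0},\C)$ equal $1$ means the monodromy is unipotent there.

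\emph{Step 1: the Lefschetz number.} By Remark \ref{remark:cohomology-isolated}, the only nonzero cohomology groups of $M_{g,0}$ are $H^0\simeq\C$, on which $T_g$ acts trivially, and $H^{r-1}\simeq\C^{\mu(g)}$. Since $r\ge 2$, these degrees are distinct. Hence
\[
 L(T_g)=1+(-1)^{r-1}\Tr\bigl(T_g\mid H^{r-1}(M_{g,0},\C)\bigr)=1+(-1)^{r-1}\mu(g),
\]
because a unipotent endomorphism of $\C^{\mu}$ has trace $\mu$.

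\emph{Step 2: applying A'Campo.} Since $0$ is a critical point of $g$ and $g(0)=0$, we have $g\in\mm^2_{\C^r,0}$. Theorem \ref{thm:acampo} then gives $L(T_g)=0$, so $(-1)^{r-1}\mu(g)=-1$. As $\mu(g)\geq 1$ is a positive integer, this forces $\mu(g)=1$ and $(-1)^{r-1}=-1$, i.e.\ $r$ is even.

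\emph{Step 3: normal form.} It remains to show that $\mu(g)=1$ implies $g$ is a nondegenerate quadratic singularity. The Milnor number is
\[
 \mu(g)=\dim_\C \sO_{\C^r,0}/(\partial_1 g,\dots,\partial_r g).
\]
If this equals $1$, the Jacobian ideal is the maximal ideal $\mm$. Each $\partial_i g$ lies in $\mm$ since $0$ is critical, so the partials must generate $\mm/\mm^2$. This means the linear parts of the $\partial_i g$ are linearly independent, i.e.\ the Hessian of $g$ at $0$ is nondegenerate. The holomorphic Morse lemma then provides local coordinates with $g=z_1^2+\cdots+z_r^2$.

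The only point needing care is Step 2, namely checking that A'Campo's theorem applies, i.e.\ that $g\in\mm^2$. This follows directly from $g(0)=0$ and $dg(0)=0$, so I expect no real obstacle; the rest is routine.
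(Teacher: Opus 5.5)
Your proposal is correct and follows the paper's proof: compute the Lefschetz number as $1+(-1)^{r-1}\mu(g)$ using unipotence, then set it to zero via A'Campo's theorem (since $g\in\mm^2$), and finish with the holomorphic Morse lemma. Your Step 3 also spells out, via the Jacobian ideal, why $\mu(g)=1$ gives a nondegenerate Hessian. The paper leaves this inside its citation of \cite[Theorem~I.2.46]{GreuelLossenShustin2007}, whose statement already includes the equivalence with $\mu=1$.
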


\begin{proof}
By Remark~\ref{remark:cohomology-isolated}, the Milnor fibre is connected
and its only nonzero reduced cohomology group is
$\widetilde H^{r-1}(M_{g,0},\C)$, of dimension $\mu(g)$. The eigenvalue
hypothesis gives
\[
 \Tr\bigl(T_g\mid\widetilde H^{r-1}(M_{g,0},\C)\bigr)=\mu(g).
\]
Since $g$ has a critical point,
$g\in\mm_{\C^r,0}^2$, and Theorem~\ref{thm:acampo} gives
\[
 0=L(T_g)=1+(-1)^{r-1}\mu(g).
\]
Thus $r$ is even and $\mu(g)=1$. The holomorphic Morse lemma
\cite[Theorem~I.2.46]{GreuelLossenShustin2007} gives the asserted normal
form.
\end{proof}

The following statement can be seen as a version of \cite[Thm.~2]{ACampo1973} for non-isolated singularities:

\begin{proposition} \label{prop:codim-one}
Let 
\[
        f\colon(\C^n,0)\to(\C,0)
\]
be a holomorphic germ defining a reduced divisor $D$ with a singularity at
\(0\). Assume that
$$
b_{f,0}(s) = (s+1)^d
$$
for some $d \geq 2$. Put
\[
 r:=\codim_{\C^n,0}D_{\sing}.
\]
If the singularity is isolated, set $H=\C^n$ and $r=n$. Otherwise assume
that there exists a smooth $r$-dimensional germ $H$ through the origin,
transverse to every stratum of
a Whitney regular stratification, such that $f|_H$ defines a reduced
divisor $D_H$ with an isolated singularity at \(0\).

Then the hypersurface $D$ is not normal, i.e. the singular locus
of $D$ has codimension one in the origin.
\end{proposition}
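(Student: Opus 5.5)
We argue by contradiction: assume $r\geq 2$, i.e.\ $D_{\sing}$ has codimension at least two in $D$, so that $D$ is normal at the origin. We then restrict to the transversal slice $H$ and derive a contradiction from the monodromy of $g:=f|_H$, which has an isolated singularity at $0$ in $H\simeq(\C^r,0)$.

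\textbf{Step 1: all monodromy eigenvalues equal $1$.} The roots of $b_{f,0}(s)$ control the eigenvalues of the Milnor monodromy at every point near $0$, by Malgrange--Kashiwara. More precisely, if $\lambda$ is an eigenvalue of the monodromy on $H^i(M_{f,x},\C)$ for some $x\in D$ near $0$, then $\lambda=\exp(-2\pi i\alpha)$ for some root $\alpha$ of $b_{f,0}(s)$. Since the only root is $-1$, every monodromy eigenvalue of $f$ at every point of $D$ near $0$ is $1$.

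The Milnor fibre of $g$ at $0$ is obtained from that of $f$ by transversal slicing. By transversality to the Whitney stratification, $M_{g,0}$ is the intersection of $M_{f,0}$ with $H$. By the Lê--Hamm type Lefschetz theorem, the restriction map $H^{i}(M_{f,0})\to H^{i}(M_{g,0})$ is compatible with monodromy and injective or bijective in the relevant range. That gives eigenvalues for only part of $H^{r-1}(M_{g,0})$, so I would argue more directly. The slice $g$ has a stabilized relation: $f$ is topologically equisingular along $D_{\sing}$ near $0$ in the product direction, so $f$ is locally topologically $g\times \mathrm{id}_{\C^{n-r}}$ at generic points of the $(n-r)$-dimensional stratum through $0$. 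It is cleanest to apply the hypothesis there. By Lemma~\ref{lemma:special-open}, the local Bernstein--Sato polynomial at nearby points also has the form $(s+1)^{d_x}$. Replacing $0$ by a generic point of the top-dimensional component of $D_{\sing}$, where the slice is still isolated, the Milnor fibration of $f$ is a product with that of $g$, and the monodromies agree. Hence every eigenvalue of $T_g$ on $\widetilde H^{r-1}(M_{g,0},\C)$ is $1$. This identification of slice monodromy with ambient monodromy is the main technical point. I expect it to be the main obstacle, and I would handle it via the Whitney transversality assumption and Lê's results on the topology of transversal sections.

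\textbf{Step 2: apply Lemma~\ref{lem:acampo-trace}.} The lemma applies to $g$, and yields that $r$ is even and $g=z_1^2+\cdots+z_r^2$ in suitable coordinates. Thus $g$ is an $A_1$ singularity in $r$ variables. Its spectrum is $t^{r/2}$, and its Bernstein--Sato polynomial is $(s+1)(s+r/2)$. In particular $-r/2$ is a root whose eigenvalue $e^{-\pi i r}=1$ is consistent, so a root-level argument is needed rather than an eigenvalue argument.

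\textbf{Step 3: derive the contradiction from the Bernstein--Sato roots.} Along the stratum, $f$ is locally analytically (not just topologically) a suspension of the Morse singularity. At a generic point of the $(n-r)$-dimensional stratum, a transversal $A_1$ singularity means $f\simeq z_1^2+\cdots+z_r^2$ times a unit, in coordinates, by the parametrized Morse lemma applied to the family of slices. Then $b_{f,x}(s)=(s+1)(s+r/2)$. Since $r\ge 2$, we have $r/2\geq 1$. If $r=2$, then $b_{f,x}=(s+1)^2$ is allowed, but $D_{\sing}$ then has codimension $2$ in $\C^n$, hence codimension one in $D$, which is what we want. If $r\geq 4$, then $-r/2\neq -1$ is a root of $b_{f,x}$, which contradicts Lemma~\ref{lemma:special-open}. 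Hence $r=2$, i.e.\ $D_{\sing}$ has codimension one in $D$, so $D$ is not normal. In the isolated case the same argument applies with $H=\C^n$ and $r=n\geq 2$: we get $b_{f,0}=(s+1)(s+n/2)$, forcing $n=2$. In that case $D$ is a node of a curve, which is again non-normal.
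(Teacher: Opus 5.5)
Your proof is correct. It follows the paper's outline (Kashiwara--Malgrange, transfer of the eigenvalue information to the slice, then Lemma~\ref{lem:acampo-trace}), but two steps are justified differently.

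\emph{The transfer to the slice.} The paper uses \cite[Cor.~1.5]{DMST06}: the spectra of $f$ and $f|_H$ agree up to sign, so a non-integral spectral number of $f|_H$ would give one for $f$. You instead use that, at a point of a Whitney stratum, the Milnor fibration of $f$ is that of the transversal slice, with the same monodromy. That is standard. Its correct justification is not Thom--Mather triviality of $D$ along the stratum. It is Thom's $a_f$ condition, which holds automatically for Whitney stratifications by Brian\c{c}on--Maisonobe--Merle; equivalently, $H$ is non-characteristic, so nearby cycles commute with restriction to $H$. You also need not move away from $0$: an $r$-dimensional $H$ transverse to the stratum through $0$ forces that stratum to have dimension $n-r$.

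\emph{The final step, which is genuinely different.} The paper computes $\widetilde\alpha_0(f|_H)=r/2$ and invokes the Musta\c{t}\u{a}--Popa inequality $\widetilde\alpha_0(f|_H)\le\widetilde\alpha_0(f)=1$ from \cite{MustataPopa2020}. You instead apply the parametrized Morse (splitting) lemma to write $f=\tilde z_1^2+\cdots+\tilde z_r^2+\phi(t)$ with $t\in\C^{n-r}$. You should say explicitly why $\phi\equiv0$: otherwise $D_{\sing}=\{\tilde z=0,\ \phi=d\phi=0\}$ would have dimension $<n-r$. Then $b_{f,0}(s)=(s+1)(s+r/2)$ at $0$ itself, with no need for Lemma~\ref{lemma:special-open}. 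Comparing with $(s+1)^d$ gives $r=2$.

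Your route avoids the Hodge-theoretic input of the minimal-exponent inequality and proves more. It forces $d=2$ and shows that $D$ is analytically a rank-two normal crossing at $0$, not just on a general plane section as in the remark after the proposition. The paper's argument, in exchange, never needs a normal form for $f$.

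One slip: your contradiction hypothesis should be $r\ge3$, not $r\ge2$. Indeed $r\ge2$ always holds and $\codim_D D_{\sing}=r-1$. Since your Step~3 proves $r=2$ outright, this does no harm.
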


\begin{proof}
The hypersurface $D$ is not smooth at the origin.
Since $D$ is reduced and singular, its singular locus has codimension at
least two in $\C^n$, so $r\geq2$.
Choose local coordinates identifying $(H,0)$ with $(\C^r,0)$.

The unique root of the Bernstein--Sato polynomial is $-1$, so by the Kashiwara--Malgrange
theorem \cite[Theorem~6.3.5]{Bjork1993} the unique eigenvalue of the monodromy
action is $1$. Thus the spectrum multiplicity $n_{c,0}(f)$ is zero for
every $c \not\in \Z$.

We claim that the monodromy action on $M_{f|_H, 0}$ has $\lambda=1$ as its unique eigenvalue: if this is not the case there exists a $c \in \Q \setminus \Z$ such that  $\lambda := e^{-2\pi i c} \neq 1$
is an eigenvalue. Since $f|_H$ has an isolated singularity at $0$ we know by Remark \ref{remark:cohomology-isolated} that $H^{r-1}(M_{f|_H, 0} ,\C)$ is the unique non-zero reduced cohomology group, so we get $H^{r-1}(M_{f|_H, 0} ,\C)_\lambda \neq 0$. Choose now an integer
$m$ such that 
$\mbox{Gr}_F ^{\lfloor r-c-m \rfloor}\widetilde{H}^{r-1}(M_{f|_H, 0},\C)_\lambda \neq 0$,
then by \eqref{multiplicity-easy} we have $n_{c+m, 0}(f|_H) \neq 0$.
Since $H \subset \C^n$ is general we know by \cite[Cor.1.5]{DMST06} that the Steenbrink spectra
$\mbox{Sp}(f,0)$ and $\mbox{Sp}(f|_H,0)$ coincide up to sign.
Therefore the spectrum multiplicty $n_{c+m,0}(f)$ is not zero. Yet $c+m$ is not an integer, a contradiction.

Lemma~\ref{lem:acampo-trace} shows that $\mu(f|_H)=1$ and that there exist local analytic
coordinates $z_1, \ldots, z_r$ on $H$ such that
$$
f|_H = \sum_{i=1}^r z_i^2, 
$$
i.e. $f|_H$ defines a hypersurface with an ordinary double point. By  \cite[Example~2.2]{Walther2002}
we have
$$
b_{f|_H, 0}(s) = (s+1) (s+\frac{r}{2}),
$$
so the minimal exponent $\widetilde\alpha_0(f|_H)$ is equal to $\frac{r}{2}$ (cf. Definition \ref{defn:minimal-exponent}). Since
$b_{f,0}(s) = (s+1)^d$ with $d \geq 2$ we have $\widetilde\alpha_0(f)=1$. By \cite[Thm.E(1)]{MustataPopa2020} we have
$$
\frac{r}{2} = \widetilde\alpha_0(f|_H) \leq \widetilde\alpha_0(f) =1.
$$
Thus we obtain $r=2$ and therefore $\dim D_{\sing} = n-2 = \dim D-1$.
\end{proof}

\begin{remark}
The proof above gives also a more geometric information: the restriction of $f$
to a general plane $\C^2 \subset \C^n$ can be written as
$$
f|_H = z_1^2+z_2^2 = (z_1 +i z_2) (z_1-iz_2),
$$
so we obtain a normal crossings singularity of rank two. 
\end{remark}

\section{A divisibility lemma for Bernstein--Sato polynomials}

In this section we prove our key technical result on the roots of the Bernstein-Sato polynomials
at periodic point.

\begin{lemma}\label{lem:power-b}
Let $(U,x)$ be a germ of a complex manifold and $0\ne h\in \cO_{U,x}$. For every integer $m\ge 1$ one has a divisibility
\[
 b_{h^m,x}(s)\ \bigm|\ \prod_{j=0}^{m-1} b_{h,x}(ms+j)
 \]
in $\CC[s]$.
Equivalently, every root of $b_{h^m,x}(s)$ is of the form $\frac{\rho-j}{m}$ for some root $\rho$ of $b_{h,x}(s)$ and some $j\in\{0,\dots,m-1\}$.
\end{lemma}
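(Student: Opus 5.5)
We will construct an explicit functional equation for $h^m$ by iterating the one for $h$, and then conclude with Remark~\ref{remark:bs-global}. Fix a neighbourhood $U'$ of $x$ on which $b_{h,x}(s)=b_{h,U'}(s)$, and an operator $P(s)\in\mathscr D_{U'}[s]$ with
\[
 P(s)\,h^{s+1}=b_{h,x}(s)\,h^{s}.
\]
This identity holds in $\cO_{U'}[h^{-1},s]\cdot h^s$. Because $s$ is a formal variable, we may specialise it along any polynomial substitution. Concretely, the $\mathscr D[s]$-linear map $s\mapsto ms+j$ identifies $\cO[h^{-1},s]h^{s}$ with a submodule of $\cO[h^{-1},s]h^{s}$, sending $h^s$ to $h^{ms+j}:=h^j\cdot(h^m)^s$. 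Applying this substitution to the functional equation gives, for each $j$,
\[
 P(ms+j)\,h^{ms+j+1}=b_{h,x}(ms+j)\,h^{ms+j}.
\]

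Next we compose these identities for $j=m-1,m-2,\dots,0$. Start from $(h^m)^{s+1}=h^{ms+m}$. Applying $P(ms+m-1)$ produces $b_{h,x}(ms+m-1)\,h^{ms+m-1}$. Then applying $P(ms+m-2)$ produces the product $b_{h,x}(ms+m-1)\,b_{h,x}(ms+m-2)\,h^{ms+m-2}$, and so on down to $j=0$. The scalars $b_{h,x}(ms+j)\in\C[s]$ are central, so they commute past the operators. Setting $Q(s)=P(ms)P(ms+1)\cdots P(ms+m-1)$, we obtain
\[
 Q(s)\,(h^m)^{s+1}=\prod_{j=0}^{m-1}b_{h,x}(ms+j)\,(h^m)^s .
\]
This is a functional equation for $h^m$ on $U'$, and it continues to hold on every smaller neighbourhood. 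By Remark~\ref{remark:bs-global}, the polynomial $\prod_j b_{h,x}(ms+j)$ therefore lies in the ideal generated by $b_{h^m,V}(s)$ for all small $V$, which gives the divisibility by $b_{h^m,x}(s)$.

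The main point requiring care is that the substitution $s\mapsto ms+j$ is legitimate. Both sides live in the free $\cO[h^{-1}]$-module generated by the symbol $h^s$, with $\mathscr D$ acting through the formal derivative rule $\partial(h^s)=s\,(\partial h)h^{-1}h^s$. The assignment $g(s)h^s\mapsto g(ms+j)\,h^j (h^m)^s$ is compatible with this action, since $\partial\bigl(h^j(h^m)^s\bigr)=(ms+j)(\partial h)h^{-1}\cdot h^j(h^m)^s$. It is also $\C[s]$-semilinear with respect to the ring map $s\mapsto ms+j$. This is enough to transport the equation.

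The equivalent formulation about roots then follows immediately. If $b_{h^m,x}(\sigma)=0$, then $b_{h,x}(m\sigma+j)=0$ for some $j$, so $\rho:=m\sigma+j$ is a root of $b_{h,x}$, and hence $\sigma=(\rho-j)/m$.
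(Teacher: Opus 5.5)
Your proposal is correct and follows the paper's proof: you substitute $s\mapsto ms+j$ in the functional equation for $h$, compose the resulting identities for $j=m-1,\dots,0$ to get $P(ms)\cdots P(ms+m-1)\,(h^m)^{s+1}=\prod_{j} b_{h,x}(ms+j)\,(h^m)^s$, and conclude by the ideal property (minimality) of $b_{h^m,x}(s)$. Your explicit check that $g(s)h^s\mapsto g(ms+j)\,h^j(h^m)^s$ is compatible with the $\mathscr{D}$-action and semilinear for $s\mapsto ms+j$ is a welcome justification of the substitution step, which the paper uses without comment.
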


\begin{proof}
Let $b_{h,x}(s)$ be the Bernstein--Sato polynomial of $h$ at $x$.
By definition, there exists $P(s)\in \mathscr{D}_{U,x}[s]$ such that
\[
 P(s)\,h^{s+1}=b_{h,x}(s)\,h^s.
\]
For each $j\in\{0,\dots,m-1\}$, substitute $s\mapsto ms+j$ and obtain
\begin{equation}\label{eq:shifted}
 P(ms+j)\,h^{ms+j+1}=b_{h,x}(ms+j)\,h^{ms+j}.
\end{equation}
Applying \eqref{eq:shifted} successively for $j=m-1,m-2,\dots,0$ yields
\[
 \Bigl(P(ms)\cdots P(ms+m-1)\Bigr)\,h^{ms+m}
 =
 \Bigl(\prod_{j=0}^{m-1} b_{h,x}(ms+j)\Bigr)\,h^{ms}.
\]
Since $h^{ms+m}=(h^m)^{s+1}$ and $h^{ms}=(h^m)^s$, the polynomial
\[
 B(s):=\prod_{j=0}^{m-1} b_{h,x}(ms+j)
\]
satisfies a Bernstein--Sato type equation for $h^m$ at $x$.
By the minimality of $b_{h^m,x}(s)$, we conclude that $b_{h^m,x}(s)$ divides $B(s)$.
\end{proof}

The behaviour of the Bernstein-Sato polynomial under birational morphisms play an important role in
the proof of Kashiwara's theorem \cite[Thm.5.1]{Kashiwara1976}. For finite morphisms we have the following application 
of a result of \`Alvarez Montaner,  Huneke, and N\'u\~nez-Betancourt:

\begin{lemma}\label{lem:cross-point}
Let
\[
 \varphi\colon (X,x)\longrightarrow (Y,y)
\]
be a finite surjective morphism of germs of complex manifolds, and let
$0\ne h\in\cO_{Y,y}$ satisfy $h(y)=0$. Then
\[
 b_{h,y}(s)\ \bigm|\ b_{h\circ\varphi,x}(s).
\]
\end{lemma}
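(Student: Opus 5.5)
Since $\varphi$ is a finite surjective morphism between germs of complex manifolds of the same dimension, it is flat. As the text announces, I will deduce the divisibility from the theorem of \`Alvarez Montaner, Huneke and N\'u\~nez-Betancourt on Bernstein--Sato polynomials and direct summands. The key point is that $\cO_{Y,y}\to\cO_{X,x}$ is a \emph{split} inclusion of $\cO_{Y,y}$-modules. Indeed, $\cO_{X,x}$ is a finite flat, hence free, $\cO_{Y,y}$-module. Because $\varphi$ is generically étale of degree $e=\deg\varphi$, the normalised trace
\[
\tfrac{1}{e}\Tr_{\cO_{X,x}/\cO_{Y,y}}\colon \cO_{X,x}\to\cO_{Y,y}
\]
is an $\cO_{Y,y}$-linear retraction of the inclusion. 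This uses that we work in characteristic zero and that $\cO_{Y,y}$ is normal, so the trace of an element integral over $\cO_{Y,y}$ lies in $\cO_{Y,y}$.

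Given the splitting, the cited result says the following. If $R\subset S$ is a direct summand, $R$ and $S$ are (analytic or formal) regular rings with the relevant $D$-module theory, and $h\in R$, then $b_h^R(s)$ divides $b_h^S(s)$, where $h$ is regarded as an element of $S$, i.e. as $h\circ\varphi$.

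I would also reproduce the mechanism, to make sure the statement applies in the analytic local setting. Take an equation $Q(s)\,(h\circ\varphi)^{s+1}=b_{h\circ\varphi,x}(s)\,(h\circ\varphi)^s$ with $Q\in\mathscr{D}_{X,x}[s]$. Specialising $s$ to integers $N\gg0$ gives identities $Q(N)(h^{N+1})=b(N)h^N$ inside $\cO_{X,x}$. Composing with the trace retraction yields operators $\Tr\circ Q(N)\circ\iota$ on $\cO_{Y,y}$. These are differential operators over $\cO_{Y,y}$ in the Grothendieck sense, since the trace is $\cO_{Y,y}$-linear; in characteristic zero they lie in $\mathscr{D}_{Y,y}$. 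The order of these operators is bounded independently of $N$. A Zariski-density argument in $N$, as in the cited paper, then produces a functional equation for $h$ with $b_{h\circ\varphi,x}(s)$ on the right. Minimality (Remark~\ref{remark:bs-global}) gives the divisibility.

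The main obstacle is this last step: passing from integer specialisations to a genuine polynomial identity in $s$. This requires the uniform order bound and the fact that $\mathscr{D}$ of a direct summand is obtained by restricting operators. I would simply quote the \`Alvarez Montaner--Huneke--N\'u\~nez-Betancourt theorem for this step. The part to verify carefully is that its hypotheses, namely direct summand and differentiably admissible regular rings, hold for convergent power series rings with the trace splitting described above.
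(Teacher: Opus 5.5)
Your proposal is correct and follows essentially the same route as the paper. In both, flatness makes $\cO_{X,x}$ free over $\cO_{Y,y}$, the normalised trace $\beta=\frac{1}{\deg\varphi}\Tr$ splits the inclusion, and the \`Alvarez Montaner--Huneke--N\'u\~nez-Betancourt argument (their Lemma~3.1, that $\beta\circ P$ restricted to the base is again a differential operator, followed by applying $\beta$ to the functional equation) together with minimality gives the divisibility.

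Your integer-specialisation step makes explicit what the paper compresses into ``applying $\beta$ to the relation''. It is also easier than you fear. Writing $Q(s)=\sum_i s^iQ_i$, the operator $\sum_i s^i\,\beta\circ Q_i\circ\iota$ lies in $\mathscr{D}_{Y,y}[s]$ with no $N$-dependence. Moreover, an element of $\cO_{Y,y}[h^{-1},s]\,h^s$ all of whose integer specialisations vanish is zero.
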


\begin{proof}
We fix analytic neighbourhoods $x \in U$ and $x \in V$ such that we have a finite holomorphic map
$$
\holom{g:=\varphi|_U}{U}{V}
$$
Up to shrinking the neighbourhoods we can assume that we are in the setup of  
Definition \ref{defn:bs-local}, i.e. we have 
\begin{equation}
\label{help}
b_{h,y}(s) = b_{h,V}(s) \qquad \mbox{and} \qquad 
b_{h \circ \varphi,x}(s) = b_{h\circ g, U}(s)
\end{equation}
Since $g$ is flat and $U$ is smooth, the injective morphism of rings
$$
g^\# : \sO_V \rightarrow g_* \sO_U, \qquad f \ \mapsto \ f \circ g
$$
gives $g_* \sO_U$ the structure of a locally free $\sO_V$-module and by \cite[Prop.5.7]{KM98} the trace map
$$
\beta := \frac{1}{\deg g} \mbox{\rm Trace}_{U/V} :  g_* \sO_U \rightarrow \sO_V
$$
defines a splitting of the inclusion $g^\#$.

Now we follow the proof of \cite[Thm.3.14]{AlvarezMontanerHunekeNunezBetancourt2017}:
let  
$P(s)\in \mathscr{D}_{U}[s]$ be the differential operator such that the relation 
\begin{equation}
\label{relation-U}
 P(s)\, (h\circ g)^{s+1}=b_{h\circ g,U}(s)\,(h\circ g)^s
\end{equation}
holds. By \cite[Lemma 3.1]{AlvarezMontanerHunekeNunezBetancourt2017}
we have $(\beta \circ P)(s)\in \mathscr{D}_{V}[s]$.
Now recall that by assumption $h \circ g$ is the image of $h$ under the ring morphism $g^\#$,
so $\beta\left((h \circ g)^s\right)= \beta(g^\# (h^s))=h^s$. 
Thus applying $\beta$ to \eqref{relation-U} yields
$$
(\beta \circ P)(s)\, h^{s+1}=b_{h \circ g,U}(s) \, h^s,
$$
i.e., the polynomial $b_{h \circ g,U}(s)$ satisfies a Bernstein-Sato type equation for $h$.
By Remark \ref{remark:bs-global} this implies that
$b_{h,V}(s)$ divides $b_{h \circ g,U}(s)$. By \eqref{help} this proves the statement. 
\end{proof}

\begin{proposition}\label{prop:key}
Let $(X,x)$ be a germ of a complex manifold, and let
$$
\holom{\varphi}{(X,x)}{(X,x)}
$$
be a finite endomorphism. Let $0\ne h\in\cO_{X,x}$ such that
\[
 h\circ \varphi = u\cdot h^m
\]
where $m \geq 2$ and $u \in\cO_{X,x}^{\times}$.
Then every root of $b_{h,x}(s)$ is at least $-1$.
\end{proposition}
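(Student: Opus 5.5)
Our plan is to combine Lemma~\ref{lem:cross-point} and Lemma~\ref{lem:power-b} into a contraction on the set of roots, and then iterate.

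First, since $u$ is a unit we may absorb it. The Bernstein--Sato polynomial does not change when the function is multiplied by a unit. One way to see this is to choose a local $m$-th root $v$ of $u$ (possible since $u(x)\neq 0$), so that $u h^m=(v h)^m$. Then $b_{vh,x}=b_{h,x}$, since $vh$ and $h$ differ by a unit and the $\mathscr D$-module relations transfer (one can also replace $h$ by $vh$ throughout). Hence
\[
 b_{h\circ\varphi,x}(s)=b_{(vh)^m,x}(s)=b_{h^m,x}(s),
\]
the last equality after a coordinate-free check that multiplying by a unit preserves $b$. Lemma~\ref{lem:cross-point}, applied to the finite surjective germ $\varphi$ (a finite endomorphism of a germ of a manifold is surjective for dimension reasons), gives $b_{h,x}(s)\mid b_{h\circ\varphi,x}(s)=b_{h^m,x}(s)$. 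By Lemma~\ref{lem:power-b}, every root $\rho$ of $b_{h,x}$ is therefore of the form $\rho=(\rho'-j)/m$, with $\rho'$ a root of $b_{h,x}$ and $0\le j\le m-1$.

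Next, let $\rho_{\min}$ be the smallest root of $b_{h,x}$. It is a negative rational number by Kashiwara's theorem, and the root set is finite. Applying the previous relation to $\rho=\rho_{\min}$ gives
\[
 \rho_{\min}=\frac{\rho'-j}{m}\ \ge\ \frac{\rho_{\min}-(m-1)}{m}.
\]
Therefore $m\rho_{\min}\ge \rho_{\min}-m+1$, that is, $(m-1)\rho_{\min}\ge -(m-1)$. Since $m\ge2$ this yields $\rho_{\min}\ge -1$, which is the claim. Note that no iteration is needed: choosing the minimal root closes the argument in one step.

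The main obstacle is the unit $u$. Lemma~\ref{lem:power-b} is stated for $h^m$, not for $uh^m$. We will handle this by passing to $\tilde h=vh$ with $v^m=u$. Then $\tilde h^m=h\circ\varphi$, and $b_{\tilde h,x}$ has the same roots as $b_{h,x}$, because $b$ is invariant under multiplication by units. The invariance follows from the standard fact that $\mathscr D_{X,x}[s]\,h^s\simeq\mathscr D_{X,x}[s]\,(vh)^s$ compatibly with multiplication by $h$ versus $vh$, since $v^{s}$ can be absorbed via $\mathscr D[s]$-linear twisting; it is also in Popa's survey. We then apply Lemma~\ref{lem:power-b} to $\tilde h$ and run the minimal-root argument with $b_{\tilde h,x}=b_{h,x}$.
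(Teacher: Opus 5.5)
Your proof is correct and follows the paper's argument. Lemma~\ref{lem:cross-point} together with unit invariance gives $b_{h,x}\mid b_{h^m,x}$, Lemma~\ref{lem:power-b} then writes each root as $(\rho'-j)/m$, and your direct inequality for the minimal root is the paper's contradiction argument ($\rho'=m\rho+j<\rho$ when $\rho<-1$) rephrased. The detour through an $m$-th root $v$ of $u$ is harmless but unnecessary: you invoke invariance of $b$ under multiplication by units anyway, which is exactly how the paper passes directly from $b_{u h^m,x}$ to $b_{h^m,x}$.
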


\begin{proof}
By Lemma~\ref{lem:cross-point}, applied with $Y=X$ and $y=x$, we have
\begin{equation}
\label{divisibility}
b_{h,x}(s) \ \bigm|\ b_{h\circ\varphi,x}(s)
 =b_{u \cdot h^m,x}(s).
\end{equation}
Multiplying with a unit does not change the Bernstein-Sato polynomial, so we have
$b_{u \cdot h^m,x}(s) = b_{h^m,x}(s)$. 
Therefore combining \eqref{divisibility}
with Lemma \ref{lem:power-b} yields
\begin{equation}\label{eq:bh-div-product}
 b_{h,x}(s)\ \bigm|\ \prod_{j=0}^{m-1} b_{h,x}(ms+j).
\end{equation}

Let ${\mathcal R}$ be the finite set of roots of $b_{h,x}(s)$ and 
let $\rho\in {\mathcal R}$ be its smallest element. Arguing by contradiction we assume that $\rho<-1$.
By \eqref{eq:bh-div-product} there exists a $j\in\{0,\dots,m-1\}$ such that
$b_{h,x}(m\rho+j)=0$. 
Thus
\[
 \rho':=m\rho+j
\]
is an element of $\mathcal R$. Since $m \geq 2$ we have
\[
 \rho'-\rho=(m-1)\rho+j\le (m-1)\rho+(m-1)=(m-1)(\rho+1)<0.
\]
Thus $\rho'<\rho$, a contradiction to the minimality of $\rho$.
\end{proof}

\section{Proof of the main results}

\begin{proof}[Proof of Theorem \ref{thm:main}]
We assume that $D$ is not smooth and denote by $Z$ an arbitrary irreducible component of $D_{\sing}$.
By Proposition \ref{prop:periodic-component} we can replace $f$ by a
suitable iterate such that every irreducible component of $D$ and of
$D_{\sing}$ is totally invariant; in particular, $f^{-1}(Z)=Z$.
By Lemma~\ref{lem:restrictpolarised}, the restriction $f|_Z\colon Z\to Z$ is a finite polarised endomorphism.
Fix an algebraic Whitney stratification of $(X,D)$. There is a nonempty
Zariski open subset $Z_\circ\subset Z$ contained in a dense smooth stratum
of $Z$ and disjoint from the other irreducible components of $D_{\sing}$.
By \cite[Thm.~5.1]{Fakhruddin2003}, periodic points of $f|_Z$ are Zariski
dense in $Z$. Choose a periodic point $x\in Z_\circ$ and replace $f$ by a
further iterate so that $f(x)=x$.

Let now  $h$ be a reduced local equation of $D$ at $x$. Since
$f^{-1}(D)=D$ set-theoretically and $f$ is polarised we know by Lemma~\ref{lem:m=q} that $f^*D=mD$
for some $m>1$.
Thus we have
\[
 h\circ f = u\cdot h^m
\]
for some unit $u\in \cO_{X,x}^\times$.
By Proposition \ref{prop:key} every root of $b_{h,x}(s)$ is at least
$-1$. 
Let $\rho_{\max}$ be the largest root, then by \cite[Thm.2.7.2]{Popa2021} we have
$$
-\rho_{\max} = \mbox{lct}_x (X,D)
$$
where $\mbox{lct}_x (X,D)$ is the log-canonical threshold of the pair $(X,D)$ in the point $x$.
By \cite[Cor.3.3]{BroustetHoring2014} the pair $(X,D)$ is log-canonical, so
we obtain $-\rho_{\max}=1$. Thus all the roots of $b_{h,x}(s)$ are equal to $-1$, i.e. we have
$$
b_{h,x}(s) = (s+1)^d
$$
for some $l \in \N$. By Lemma \ref{lemma:special-open}, applied to some affine neighbourhood of $x$, the set of points $x' \in Z$ where
$b_{h,x'}(s) = (s+1)^{d_{x'}}$ is Zariski open in $Z$. Thus
up to shrinking the Zariski open subset $Z_\circ \subset Z$ 
 there exists a $d_{gen} \in \N$
such that for every 
$x' \in Z_\circ$ we have
$$
b_{h,x'}(s) = (s+1)^{d_{gen}}. 
$$
Since $Z \subset D_{\sing}$ we have $d_{gen} \geq 2$, e.g. by \cite[Proposition~3.28]{AlvarezMontanerJeffriesNunezBetancourt2022}.

Let $H_1, \ldots, H_{\dim Z}$ be general hyperplane sections of $X$ and set $H:= H_1 \cap \ldots \cap H_{\dim Z}$. 
By Bertini's theorem the linear space $H$ is transversal to every stratum of the Whitney stratification. 
Moreover the intersection
$H \cap Z_\circ$ is finite and not empty, so for a point $x_0 \in H \cap Z_\circ$ the linear space $H$
satisfies the conditions of  Proposition \ref{prop:codim-one}.
Thus we know that $D_{\sing} \subset D$ has codimension one in $x_0 \in Z_\circ$.
Since $Z_\circ$ is disjoint from the other irreducible components of $D_{\sing}$
this shows that $Z$
has codimension one in $D$.
\end{proof}

\begin{remark}
The proof above shows that in the situation of Theorem \ref{thm:main} every 
irreducible component $Z$ of $D_{\sing}$ has
codimension two in $X$. Since the pair $(X, D)$ is log-canonical, this shows that
the hypersurface $D$ has two smooth transverse branches in a general point of $Z$.
\end{remark}

\begin{proof}[Proof of Corollary \ref{cor:intro-pn}]
By assumption $D$ is normal, so by Theorem \ref{thm:main} the divisor $D$ is even smooth.
A smooth totally invariant irreducible hypersurface in $\PP^n$ must have degree one by \cite[Thm.~1 and Thm.~2]{CerveauLinsNeto2000} together with \cite[Prop.~8]{ParanjapeSrinivas1989}.
\end{proof}

\section{Invariant cubics and linearity in dimension four}
\label{sec:cubics-linearity}

We start with a special case:

\begin{proposition}\label{prop:F5-not-totally-invariant}
Let $f\colon\PP^4\to\PP^4$ be an endomorphism of the projective space,
and let
\[
 D_5=\bigl\{X_0^2X_2+X_1^3+X_1^2X_3+X_0X_1X_4=0\bigr\}
 \subset \PP^4
\]
be the cubic $F_5$ of \cite[Theorem~3.1(a.5)]{LPS11}. Then $D_5$ is not
totally invariant under $f$.
\end{proposition}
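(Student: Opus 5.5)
The plan is to argue by contradiction: assume $D_5$ is totally invariant. I will first locate a smooth conic inside the singular plane of $D_5$ that detects where $D_5$ fails to have two transverse branches. Then I will show, using Bernstein--Sato polynomials, that this conic is totally invariant for the endomorphism induced on the plane. That contradicts \cite{CerveauLinsNeto2000}.

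\emph{Step 1: the singular locus and the conic.} Write $F=X_0^2X_2+X_1^3+X_1^2X_3+X_0X_1X_4$. The partial derivatives $\partial_{X_2}F=X_0^2$, $\partial_{X_3}F=X_1^2$ and $\partial_{X_4}F=X_0X_1$ force $X_0=X_1=0$. Conversely, $\partial_{X_0}F$ and $\partial_{X_1}F$ vanish identically on $\{X_0=X_1=0\}$. Hence $Z:=(D_5)_{\sing}=\{X_0=X_1=0\}\simeq\PP^2$, with coordinates $X_2,X_3,X_4$.

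At a point of $Z$, the quadratic part of $F$ in the transversal variables $X_0,X_1$ is the binary form $X_2X_0^2+X_4X_0X_1+X_3X_1^2$. Its discriminant defines the smooth conic
\[
C=\{X_4^2-4X_2X_3=0\}\subset Z .
\]
Off $C$ the form is nondegenerate, so $D_5$ is locally analytically $\{z_1z_2=0\}\times\C^2$. At a general point of $C$ the form has rank one, and I expect an analytic coordinate change, using the cubic term $X_1^3$ and the variation along $C$, to give the local type of a pinch point times a smooth factor, $z_1^2+z_2^2z_3$. This local normal form is the computation I expect to be the main obstacle: it must be checked carefully at a general point of $C$, e.g. by completing the square in $X_0$ after a linear change adapted to the rank-one form. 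By Example~\ref{example:BSpolynomials}, at such a point $x$ one then has $b_{h,x}(s)=(s+1)^2(s+\tfrac32)$.

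\emph{Step 2: $Z$ is invariant.} By Proposition~\ref{prop:periodic-component}, after replacing $f$ by an iterate we may assume $f^{-1}(Z)=Z$. By Lemma~\ref{lem:restrictpolarised}, $g:=f|_Z\colon\PP^2\to\PP^2$ is then an endomorphism of degree $q^2>1$. By Lemma~\ref{lem:m=q} we have $f^*D_5=qD_5$, so locally $h\circ f=u\cdot h^q$ for a unit $u$.

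\emph{Step 3: $C$ is totally invariant for $g$.} Let $W$ be an irreducible component of $g^{-1}(C)$. By Remark~\ref{remark:dim}, $W$ is a curve mapping onto $C$, so a general point $y\in W$ maps to a general point $x\in C$, where $D_5$ has pinch type. Suppose $y\notin C$. Then near $y$ the divisor is $\{z_1z_2=0\}\times\C^2$, and $h\circ f=u(z_1z_2)^q$. By Lemma~\ref{lem:cross-point}, $b_{h,x}$ divides $b_{(z_1z_2)^q,y}$. By Example~\ref{example:BSpolynomials} the roots of the latter are $-1+\tfrac kq\ge-1$, whereas $b_{h,x}$ has the root $-\tfrac32$. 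This is a contradiction, so $W\subset C$, hence $W=C$ and $g^{-1}(C)=C$.

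A smooth conic totally invariant under an endomorphism of $\PP^2$ is excluded by \cite[Thm.~1 and Thm.~2]{CerveauLinsNeto2000} (equivalently by Corollary~\ref{cor:intro-pn} for $n=2$). This gives the contradiction, so $D_5$ is not totally invariant.
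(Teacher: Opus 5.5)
Your proposal is correct and follows the paper's proof essentially step for step. The paper's conic $\Gamma=\{Y_0=Y_1=0,\ Y_2Y_4=\tfrac14Y_3^2\}$ (with $Y_3=X_4$, $Y_4=X_1+X_3$) is exactly your $C$. The contradiction again comes from Lemma~\ref{lem:cross-point}, comparing the root $-\tfrac32$ at a pinch point with the roots $-1+\tfrac kq$ of $(z_1z_2)^q$, and then \cite{CerveauLinsNeto2000} finishes the argument.

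The normal form you left open is the paper's completing-the-square computation. On the chart $X_2=1$ it reads $F=(x_0+\tfrac12x_1x_4)^2+x_1^2(x_1+x_3-\tfrac14x_4^2)$, i.e.\ $u^2+wy^2$ in polynomial coordinates. So every point of $C$ in that chart is a pinch point, and the term $X_1^3$ is simply absorbed into $w$ rather than being needed. This is all your general-point version of Step~3 requires.
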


\begin{proof}
We argue by contradiction, so suppose that $D:=D_5$ is totally invariant under $f$.
Making a coordinate change 
$$
Y_i=X_i \quad \mbox{for} \quad i=\{0,1,2\}, \qquad Y_3=X_4, \qquad Y_4=X_1+X_3
$$
the equation of $D$ becomes
\[
 S:= Y_0^2Y_2+Y_0Y_1Y_3+Y_1^2Y_4.
\]
In these cooordinates the singular locus of $D$ is
\[
 Z :=D_{\sing}=\{Y_0=Y_1=0\}\simeq\PP^2.
\]
Up to replacing $f$ by an iterate, Proposition~\ref{prop:periodic-component}
yields
$
f^{-1}(Z)=Z,
$
and by Lemma~\ref{lem:restrictpolarised} the restriction
$$
\holom{g:=f|_Z}{Z \simeq \PP^2}{Z \simeq \PP^2} 
$$ 
is an endomorphism of $\PP^2$.

On the affine chart $Y_2 \neq 0$, we can rewrite the equation $S$ as

$$
\left(\frac{Y_0}{Y_2}\right)^2 + \frac{Y_0}{Y_2}
\frac{Y_1}{Y_2}\frac{Y_3}{Y_2} 
+ \left(\frac{Y_1}{Y_2}\right)^2 \frac{Y_4}{Y_2}
\\
=
\left(
\frac{Y_0}{Y_2} + \frac{1}{2} \frac{Y_1}{Y_2} \frac{Y_3}{Y_2} 
\right)^2
+
\left[
\frac{Y_4}{Y_2} - \frac{1}{4} \left(\frac{Y_3}{Y_2}\right)^2
\right]
\left(\frac{Y_1}{Y_2}\right)^2.
$$

Thus in the affine coordinates
$$
u = \frac{Y_0}{Y_2} + \frac{1}{2} \frac{Y_1}{Y_2} \frac{Y_3}{Y_2}, \
v= \frac{Y_3}{Y_2}, \
w = \frac{Y_4}{Y_2} - \frac{1}{4} \left(\frac{Y_3}{Y_2}\right)^2, \
y =  \frac{Y_1}{Y_2}
$$
the equation of $D$ simplifies to
\begin{equation}
\label{supersimple} u^2+wy^2.
\end{equation}
Set now
$$
\Gamma := \{ Y_0=Y_1=0, Y_4 Y_2 - \frac{1}{4} Y_3^2=0 \}.
$$
Then $\Gamma$ is a smooth conic in $D_{\sing} \simeq Z$ and its equations in the affine
chart $Y_2 \neq 0$ are $u=y=w=0$. Thus \eqref{supersimple} (and its analogues in the other charts) shows that
$D$ has a pinch point (cf. Example \ref{example:BSpolynomials}) along the curve $\Gamma$
and normal crossing singularities of rank two on $Z \setminus \Gamma$.

We claim that $\Gamma$ is totally invariant for the endomorphism $g$. Since $\Gamma$ is a smooth conic 
this contradicts \cite[Thm.2]{CerveauLinsNeto2000} and finishes the proof.

{\em Proof of the claim.} Arguing by contradiction we assume that there
exist points $z \in \Gamma$ and $p \in (Z \setminus \Gamma)$ such that $g(p)=z$. 
Since $D$ is totally invariant for $f$ we have $f^*D = q D$ by Lemma \ref{lem:m=q}.
In terms of the equation $S$ this means that 
$S \circ f = c S^q$ where $c \neq 0$ is a unit. 
Applying
Lemma~\ref{lem:cross-point} to the germ
\[
 f\colon(\PP^4,p)\longrightarrow(\PP^4,z)
\]
gives
\begin{equation}
\label{divide-cross}  b_{S,z}(s)\mid b_{c S^q,p}(s).
\end{equation}
Since $D$ has normal crossing singularities of rank two in $p$, we know by Example \ref{example:BSpolynomials} that
$$
b_{c S^q,p}(s) = \prod_{k=0}^{q-1} (s+1-\frac{k}{q})^2.
$$
Yet $D$ has a pinch point in $Z$, so again by Example \ref{example:BSpolynomials} 
$$
b_{S,z}(s) =  (s+1)^2\left(s+\dfrac{3}{2}\right).
$$
Thus $-\frac{3}{2}$ is a root of $b_{S,z}(s)$, but not of $b_{c S^q,p}(s)$.
This contradicts \eqref{divide-cross}.
\end{proof}

\begin{proof}[Proof of Corollary~\ref{cor:cubic}]
We argue by contradiction and assume that there exists an endomorphism
$f\colon\PP^n\to\PP^n$ and a prime divisor $D\subset\PP^n$ of degree three that is totally invariant.
We can assume that we have chosen an endomorphism $f$ realising the smallest possible
value for $n \in \N$.  Since Conjecture \ref{conj:linear} is known for $n \leq 3$ we have $n \geq 4$.  
By \cite[Theorem~1.3]{Mabed2023} the minimality implies that $D$ is not a cone.
By Corollary \ref{cor:intro-pn} we can assume that $D$ is not normal, so
we can a apply \cite[Thm.3.11]{LPS11} to obtain that $n=4$ and $D$ is the cubic threefold $F_5$.
Yet this contradicts Proposition~\ref{prop:F5-not-totally-invariant}.
\end{proof}

\begin{proof}[Proof of Corollary~\ref{cor:linear-p4}]
Let $D\subset\PP^4$ be a totally invariant prime divisor of degree $d$.
By \cite[Theorem~2.1]{HN11} we have $d \leq 4$ and the case $d=4$ is
excluded by \cite[Theorem~1.1]{Hoe17}. The case $d=3$ is excluded by
Corollary~\ref{cor:cubic}.
A prime divisor of degree two is normal, so this case is excluded by
Corollary~\ref{cor:intro-pn}. Thus $D$ is a hyperplane. 
\end{proof}

%\bibliographystyle{amsalpha}
%\bibliography{biblio}

\begin{thebibliography}{AMHNnB17}

\bibitem[A'C73]{ACampo1973}
Norbert A'Campo, \emph{Le nombre de {Lefschetz} d'une monodromie}, Nederl.
  Akad. Wet., Proc., Ser. A \textbf{76} (1973), 113--118 (French).

\bibitem[AMHNnB17]{AlvarezMontanerHunekeNunezBetancourt2017}
Josep \`Alvarez~Montaner, Craig Huneke, and Luis N\'u\~nez Betancourt,
  \emph{D-modules, {B}ernstein--{S}ato polynomials and {F}-invariants of direct
  summands}, Advances in Mathematics \textbf{321} (2017), 298--325.

\bibitem[AMJNnB22]{AlvarezMontanerJeffriesNunezBetancourt2022}
Josep \`Alvarez~Montaner, Jack Jeffries, and Luis N\'u\~nez Betancourt,
  \emph{Bernstein--{S}ato polynomials in commutative algebra}, Commutative
  Algebra: Expository Papers Dedicated to David Eisenbud on the Occasion of His
  75th Birthday, Springer, Cham, 2022, pp.~1--76.

\bibitem[BD01]{BD01}
Jean-Yves Briend and Julien Duval, \emph{Two characterizations of equilibrium
  measure of an endomorphism of {{\(P^k(\mathbb{C})\)}}}, Publ. Math., Inst.
  Hautes {\'E}tud. Sci. \textbf{93} (2001), 145--159 (French).

\bibitem[Bea01]{Beauville2001}
Arnaud Beauville, \emph{Endomorphisms of hypersurfaces and other manifolds},
  International Mathematics Research Notices \textbf{2001} (2001), no.~1,
  53--58.

\bibitem[BG17]{BG17}
Ama{\"e}l Broustet and Yoshinori Gongyo, \emph{Remarks on log {Calabi}-{Yau}
  structure of varieties admitting polarized endomorphisms}, Taiwanese J. Math.
  \textbf{21} (2017), no.~3, 569--582 (English).

\bibitem[BH14]{BroustetHoring2014}
Ama\"el Broustet and Andreas H\"oring, \emph{Singularities of varieties
  admitting an endomorphism}, Mathematische Annalen \textbf{360} (2014),
  no.~1-2, 439--456.

\bibitem[Bj{\"o}93]{Bjork1993}
Jan-Erik Bj{\"o}rk, \emph{Analytic {$D$}-modules and applications}, Math.
  Appl., Dordr., vol. 247, Dordrecht: Kluwer Academic Publishers, 1993
  (English).

\bibitem[Bud12]{Budur2012}
Nero Budur, \emph{Singularity invariants related to {Milnor} fibers: survey},
  Zeta functions in algebra and geometry. Second international workshop,
  Universitat de les Illes Balears, Palma de Mallorca, Spain, May 3--7, 2010.
  Proceedings, Providence, RI: American Mathematical Society (AMS); Madrid:
  Real Sociedad Matem{\'a}tica Espa{\~n}ola, 2012, pp.~161--187 (English).

\bibitem[CDNMS22]{CDNMS2022}
Alberto Casta{\~n}o~Dom{\'i}nguez, Luis Narv{\'a}ez~Macarro, and Christian
  Sevenheck, \emph{Hodge ideals of free divisors}, Selecta Math. (N.S.)
  \textbf{28} (2022), no.~3, Paper No. 57, 62.

\bibitem[CL26]{CL26}
Wentao Chang and Yujie Luo, \emph{Sharp {Bounds} for {Totally} {Invariant}
  {Cycles} of {Projective} {Varieties}}, Preprint, {arXiv}:2607.27738
  [math.{AG}] (2026), 2026.

\bibitem[CLN00]{CerveauLinsNeto2000}
Dominique Cerveau and Alcides Lins~Neto, \emph{Hypersurfaces exceptionnelles
  des endomorphismes de {{$\mathbb{C}P(n)$}}}, Boletim da Sociedade Brasileira
  de Matem\'atica. Nova S\'erie \textbf{31} (2000), no.~2, 155--161.

\bibitem[CZ25]{CZ25}
Wentao Chang and De-Qi Zhang, \emph{Log {Calabi}-{Yau} structure of algebaic
  varieties admitting a polarized endomorphism}, Preprint, {arXiv}:2509.17927
  [math.{AG}] (2025), 2025.

\bibitem[DMST06]{DMST06}
A.~Dimca, Ph. Maisonobe, M.~Saito, and T.~Torrelli, \emph{Multiplier ideals,
  {$V$}-filtrations and transversal sections}, Math. Ann. \textbf{336} (2006),
  no.~4, 901--924 (English).

\bibitem[Fak03]{Fakhruddin2003}
Najmuddin Fakhruddin, \emph{Questions on self maps of algebraic varieties},
  Journal of the Ramanujan Mathematical Society \textbf{18} (2003), no.~2,
  109--122.

\bibitem[FS94]{FS94}
John~Erik Fornaess and Nessim Sibony, \emph{Complex dynamics in higher
  dimension. {I}}, Complex analytic methods in dynamical systems. Proceedings
  of the congress held at Instituto de Matem\'atica Pura e Aplicada, IMPA, Rio
  de Janeiro, Brazil, January 1992, Paris: Soci{\'e}t{\'e} Math{\'e}matique de
  France, 1994, pp.~201--231 (English).

\bibitem[GLS07]{GreuelLossenShustin2007}
Gert-Martin Greuel, Christoph Lossen, and Eugenii Shustin, \emph{Introduction
  to singularities and deformations}, Springer Monographs in Mathematics,
  Springer, Berlin, 2007.

\bibitem[GR20]{GiraldoRoeder2020}
Luis Giraldo and Roland K.~W. Roeder, \emph{Pulling back singularities of
  codimension one objects}, Proceedings of the American Mathematical Society
  \textbf{148} (2020), no.~3, 1207--1217.

\bibitem[Har77]{Har77}
Robin Hartshorne, \emph{Algebraic geometry}, Springer-Verlag, New York, 1977,
  Graduate Texts in Mathematics, No. 52. \MR{MR0463157 (57 \#3116)}

\bibitem[HN11]{HN11}
Jun-Muk Hwang and Noboru Nakayama, \emph{On endomorphisms of {F}ano manifolds
  of {P}icard number one}, Pure Appl. Math. Q. \textbf{7} (2011), no.~4,
  Special Issue: In memory of Eckart Viehweg, 1407--1426. \MR{2918167}

\bibitem[H{\"o}r17]{Hoe17}
Andreas H{\"o}ring, \emph{Totally invariant divisors of endomorphisms of
  projective spaces}, Manuscr. Math. \textbf{153} (2017), no.~1-2, 173--182
  (English).

\bibitem[Kas76]{Kashiwara1976}
Masaki Kashiwara, \emph{B-functions and holonomic systems. {Rationality} of
  roots of {B}-functions}, Invent. Math. \textbf{38} (1976), 33--53 (English).

\bibitem[KM98]{KM98}
J{\'a}nos Koll{\'a}r and Shigefumi Mori, \emph{Birational geometry of algebraic
  varieties}, Cambridge Tracts in Mathematics, vol. 134, Cambridge University
  Press, Cambridge, 1998, With the collaboration of C. H. Clemens and A. Corti.
  \MR{MR1658959 (2000b:14018)}

\bibitem[LM26]{LM26}
Yujie Luo and Sheng Meng, \emph{Log {Calabi}--{Yau} structure for endomorphisms
  on {$\mathbf{P}^n$}}, Preprint, {arXiv}:2608.02114 [math.{AG}] (2026), 2026.

\bibitem[LPS11]{LPS11}
Wanseok Lee, Euisung Park, and Peter Schenzel, \emph{On the classification of
  non-normal cubic hypersurfaces}, J. Pure Appl. Algebra \textbf{215} (2011),
  no.~8, 2034--2042 (English).

\bibitem[Mab23]{Mabed2023}
Yanis Mabed, \emph{Totally invariant divisors of non trivial endomorphisms of
  the projective space}, Geometriae Dedicata \textbf{217} (2023), 79.

\bibitem[Mal74]{Mal74}
Bernard Malgrange, \emph{Int{\'e}grales asymptotiques et monodromie}, Ann. Sci.
  {\'E}c. Norm. Sup{\'e}r. (4) \textbf{7} (1974), 405--430 (French).

\bibitem[Men23]{Men23}
Sheng Meng, \emph{Log {Calabi}-{Yau} structure of projective threefolds
  admitting polarized endomorphisms}, Int. Math. Res. Not. \textbf{2023}
  (2023), no.~24, 21272--21289 (English).

\bibitem[Mil68]{Milnor1968}
John~W. Milnor, \emph{Singular points of complex hypersurfaces}, Ann. Math.
  Stud., vol.~61, Princeton University Press, Princeton, NJ, 1968 (English).

\bibitem[MP20]{MustataPopa2020}
Mircea Musta\c{t}\u{a} and Mihnea Popa, \emph{Hodge ideals for {Q}-divisors,
  {$V$}-filtration, and minimal exponent}, Forum of Mathematics, Sigma
  \textbf{8} (2020), e19.

\bibitem[MYY25]{MYY25}
Joaqu{\'{\i}}n Moraga, Wern Yeong, and Jos{\'e}~Ignacio Y{\'a}{\~n}ez,
  \emph{Polarized endomorphisms of {Fano} varieties with complements}, Int.
  Math. Res. Not. \textbf{2025} (2025), no.~8, 18 (English), Id/No rnaf093.

\bibitem[Nak23]{Nak23}
Noboru Nakayama, \emph{Singularity of normal complex analytic surfaces
  admitting non-isomorphic finite surjective endomorphisms}, Osaka J. Math.
  \textbf{60} (2023), no.~2, 403--489 (English).

\bibitem[Pop21]{Popa2021}
Mihnea Popa, \emph{{$\mathcal{D}$}-modules in birational geometry}, Expanded
  lecture notes for Math 296, Harvard University, 2021.

\bibitem[PS89]{ParanjapeSrinivas1989}
K.~H. Paranjape and V.~Srinivas, \emph{Self-maps of homogeneous spaces},
  Invent. Math. \textbf{98} (1989), no.~2, 425--444. \MR{1016272 (91a:14034)}

\bibitem[Sai93]{Saito1993}
Morihiko Saito, \emph{On {$b$}-function, spectrum and rational singularity},
  Mathematische Annalen \textbf{295} (1993), no.~1, 51--74.

\bibitem[Wal02]{Walther2002}
Uli Walther, \emph{{$\mathcal{D}$}-modules and cohomology of varieties},
  Computations in algebraic geometry with Macaulay 2, Berlin: Springer, 2002,
  pp.~281--323 (English).

\bibitem[Yos22]{Yos22}
Shou Yoshikawa, \emph{Singularities of non-{{\(\mathbb{Q}\)}}-{Gorenstein}
  varieties admitting a polarized endomorphism}, Int. Math. Res. Not.
  \textbf{2022} (2022), no.~13, 10095--10118 (English).

\bibitem[Zha14]{Zha14}
De-Qi Zhang, \emph{Invariant hypersurfaces of endomorphisms of projective
  varieties}, Adv. Math. \textbf{252} (2014), 185--203. \MR{3144228}

\bibitem[Zho21]{Zho21}
Guolei Zhong, \emph{Totally invariant divisors of int-amplified endomorphisms
  of normal projective varieties}, J. Geom. Anal. \textbf{31} (2021), no.~3,
  2568--2593 (English).

\end{thebibliography}

\providecommand{\bysame}{\leavevmode\hbox to3em{\hrulefill}\thinspace}
\providecommand{\MR}{\relax\ifhmode\unskip\space\fi MR }
% \MRhref is called by the amsart/book/proc definition of \MR.
\providecommand{\MRhref}[2]{%
  \href{http://www.ams.org/mathscinet-getitem?mr=#1}{#2}
}
\providecommand{\href}[2]{#2}

\end{document}